\theoremstyle{plain}
\newtheorem*{maintheorem}{Main Theorem}
\newtheorem{theorem}{Theorem}[section]
\newtheorem*{theorem*}{Theorem}
\newtheorem{corollary}[theorem]{Corollary}
\newtheorem{lemma}[theorem]{Lemma}
\theoremstyle{definition}
\newtheorem{definition}[theorem]{Definition}
\theoremstyle{remark}
\newtheorem{remark}[theorem]{Remark}
\newtheorem{example}[theorem]{Example}
\numberwithin{equation}{section}
\newcommand{\N}{\mathbb N} 
\newcommand{\R}{\mathbb R} 
\newcommand{\dist}{\operatorname{dist}}
\newcommand{\diam}{\operatorname{diam}}
\newcommand{\wto}{\stackrel*\rightharpoonup}
\newcommand{\ol}{\overline}
\newcommand{\E}{{\mathcal E}}
\newcommand{\W}{{\mathcal W}}
\renewcommand{\L}{{\mathcal L}}
\newcommand{\LRa} {\Leftrightarrow}
\newcommand{\Ra} {\Rightarrow}
\newcommand{\sdist}{\operatorname{sdist}}
\renewcommand{\H}{{\mathcal H}}
\newcommand{\spt}{\operatorname{spt}}
\newcommand{\inv}{^{-1}}
\newcommand{\cc}{\Subset}
\newcommand{\embeds}{\xhookrightarrow{\quad}}
\newcommand{\eps}{\varepsilon}
\newcommand{\dx}{\,\mathrm{d}x}
\renewcommand{\d}{\,\mathrm{d}}
 \newcommand{\dy}{\,\mathrm{d}y}
\begin{document}

\title[Convergence of Phase-Fields]{Uniform Regularity and Convergence of Phase-Fields for Willmore's Energy}

\author{Patrick W.~Dondl}
\address{Patrick W.~Dondl\\
Abteilung f\"ur Angewandte Mathematik\\
Albert-Ludwigs-Universit\"at Freiburg\\
Hermann-Herder-Str.~10\\
79104 Freiburg i.~Br.\\
Germany \\
Phone: +49 761 203-5642\\
Fax: +49 761 203-5644}
\email{patrick.dondl@mathematik.uni-freiburg.de}

\author{Stephan Wojtowytsch}
\address{Stephan Wojtowytsch\\Department of Mathematical Sciences\\Durham University\\Durham DH1\,1PT, United Kingdom}
\email{s.j.wojtowytsch@durham.ac.uk}

\date{\today}

\subjclass[2010]{49Q20; 49Q10; 49N60; 35J15; 35J35; 74G65}
\keywords{Willmore energy, phase field, uniform convergence}

\begin{abstract}
We investigate the convergence of phase fields for the Willmore problem away from the support of a limiting measure $\mu$. For this purpose, we introduce a suitable notion of essentially uniform convergence. This mode of convergence is a natural generalisation of uniform convergence that precisely describes the convergence of phase fields in three dimensions. 

More in detail, we show that, in three space dimensions, points close to which the phase fields stay bounded away from a pure phase lie either in the support of the limiting mass measure $\mu$ or contribute a positive amount to the limiting Willmore energy. Thus there can only be finitely many such points.

As an application, we investigate the Hausdorff limit of level sets of sequences of phase fields with bounded energy. We also obtain results on boundedness and $L^p$-convergence of phase fields and convergence from outside the interval between the wells of a double-well potential. For minimisers of suitable energy functionals, we deduce uniform convergence of the phase fields from essentially uniform convergence.
\end{abstract}

\maketitle

\tableofcontents

\section{Introduction and Main Results}

Problems both in pure and applied mathematics lead to investigating energies depending on the curvatures of manifolds. A famous example is Willmore's energy
\[
\W(\Sigma) = \int_\Sigma H^2\d\H^{n-1}
\]
where $\Sigma\subset\R^{n}$ is a hypersurface, $H$ denotes its mean curvature and $\H^k$ the $k$-dimensional Hausdorff measure. Often, one may reserve the term Willmore's energy for the case $n=3$ and denote the same energy on plane curves as Euler's elastica energy. Like the minimal surface problem in the gradient-theory of phase transitions \cite{modica:1987us}, this energy can be approximated by diffuse functionals on phase-fields \cite{bretin2013phase}. For a more extensive review of the literature, see e.g.\ \cite{MR3590663}. 

There are several phase-field models for Willmore's energy. The one described in the following is particularly satisfying for two reasons: 1) Higher order terms appear only quadratically, allowing for a comparatively easy and stable implementation of the gradient flow and 2) the Willmore integrand is the density associated to the first variation of the diffuse area integrand squared. In this sense, the diffuse functional is constructed in direct analogy to the sharp interface energy. 

Assume that $n=2,3$, $\Omega\cc\R^n$, $W$ is the double-well potential $W(u) = \frac14\,(u^2-1)^2$ and $c_0 = \int_{-1}^1\sqrt{2\,W(s)}\:ds = 2\sqrt{2}/3$. Then we consider the Modica-Mortola energy
\begin{equation*}
 S_\eps\colon L^1(\Omega)\to \R, \quad S_\eps(u) = \begin{cases}\frac1{c_0}\int_\Omega \frac\eps2\, |\nabla u|^2 + \frac1\eps\,W(u)\dx &u\in W^{1,2}(\Omega)\\ +\infty&\text{else}\end{cases}
\end{equation*}
as an approximation of the surface area measure and the functional
\begin{equation*}
\W_\eps\colon L^1(\Omega)\to\R, \quad \W_\eps(u) = \begin{cases}\frac1{c_0}\int_\Omega\frac1\eps\,\left(\eps\,\Delta u - \frac1\eps\,W'(u)\right)^2\dx&u\in W^{2,2}(\Omega)\\ +\infty &\text{else}\end{cases}
\end{equation*}
as an approximation of Willmore's energy. In \cite{roger:2006ta}, R\"oger and Sch\"atzle proved that 
\begin{equation*}
\left.\Gamma(L^1(\Omega))-\lim_{\eps\to 0}\,(\W_\eps + \Lambda\,S_\eps)\right.\,(\chi_E - \chi_{\Omega\setminus E})\: =\: \W(\partial E) + \Lambda\,\H^{n-1}(\partial E)
\end{equation*}
for any $\Lambda>0$ if $E\cc \Omega$ and $\partial E\in C^2$ in $n=2,3$ space dimensions. We shall always restrict ourselves to these dimensions in the following. The recovery sequence is constructed by using the well-known transition layer for the Modica-Mortola energy \cite{bellettini:1993vg}. It is given by functions of the type
\[
u^\eps(x) = q^\eps( \sdist(x,\partial E)/\eps)
\]
where $\sdist$ is the signed distance function to $\partial E$ and $q^\eps$ is a suitably cut-off approximation of $q (t) = \tanh(t/\sqrt{2})$ which is the stationary transition between the wells $u = \pm 1$ of $W$ in one dimension since $-q'' + W'(q) = 0$. Sequences of this general structure will be referred to as {\it optimal profiles}. 

Clearly, optimal profiles converge to $\chi_E-\chi_{\Omega\setminus E}$ in $L^1(\Omega)$. More generally, they converge in all finite $L^p$-spaces and they become constant away from $\partial E$. Our aim in this article is to understand the convergence properties of general sequences $u_\eps$ such that the total energy $\E_\eps(u_\eps)\coloneqq  (\W_\eps + S_\eps)(u_\eps)$ remains bounded in $n=3$ dimensions. 

We notice that such a sequence must have a limit point in the $L^2$-sense (in any dimension) since a uniform bound on $S_\eps$ implies a uniform bound on $u_\eps$ in $L^4(\Omega)$ and on $G(u_\eps)$ in $BV(\Omega)$ where $G$ is a primitive function of $\sqrt{2W}$. This argument only uses the strict monotone growth of $G$ and Young's inequality.

While $u_\eps\in W^{2,2}(\Omega)$ is continuous on $\Omega$ (and $\overline\Omega$ if $\partial \Omega \in C^{0,1}$) due to the Sobolev embedding theorems, the limit $u$ is either discontinuous or constant $\pm 1$, and uniform convergence $u_\eps\to u$ cannot be expected. 

To properly understand the convergence of a fixed finite energy sequence $u_\eps$, one needs to consider the associated Radon measures $\mu_\eps, \alpha_\eps$ given on open sets $U\subset\R^n$ by
\begin{equation*}
\mu_\eps(U) = \frac1{c_0}\int_{U\cap\Omega} \frac\eps2\, |\nabla u_\eps|^2 + \frac1\eps\,W(u_\eps)\dx, \qquad \alpha_\eps(U)= \frac1{c_0}\int_{U\cap\Omega}\frac1\eps\,\left(\eps\,\Delta u_\eps - \frac1\eps\,W'(u_\eps)\right)^2\dx
\end{equation*}
which localise the functionals $S_\eps$ and $\W_\eps$ respectively. We also denote the Willmore integrand by $v_\eps\coloneqq  -\eps\,\Delta u_\eps + \frac1\eps W'(u_\eps)$. By the compactness theorem for Radon measures, there exist finite Radon measures $\mu,\alpha$ supported in $\overline\Omega$ with $|D u|\leq 2\mu$ such that (for a further subsequence) $\mu_\eps\wto \mu$ and $\alpha_\eps\wto\alpha$. Due to \cite{roger:2006ta}, the restriction $\mu|_\Omega$ is the mass measure of an integral varifold with square integrable mean curvature and $H^2_\mu\cdot\mu|_\Omega\leq \alpha$. 
In this article, we will always make the following non-restrictive assumptions:

\begin{enumerate}
\item The sequence $u_\eps$ has finite energy, i.e.\ $\limsup_{\eps\to \infty}\E_\eps(u_\eps)<\infty$,

\item all quantities have a limit, i.e.\ $u_\eps\to u$ in $L^2(\Omega)$, $\mu_\eps\wto \mu$ and $\alpha_\eps\wto \alpha$,

\item $\eps$ is small enough for the phase fields to resemble the limit in the sense that we assume that $\mu(\R^n) = \bar\mu$, $\alpha(\R^n) = \bar\alpha$ and $\mu_\eps(\R^n)\leq \bar\mu+1$, $\alpha_\eps(\R^n) \leq\bar\alpha+1$ and

\item on $\partial\Omega$ we have either $|u_\eps|\leq \theta$ for all $\eps>0$ and some $\theta\geq1$ or $\partial_\nu u_\eps = 0$ for all $\eps>0$.
\end{enumerate}
Sequences along which the Modica-Mortola energy $S_\eps(u_\eps)$ stays bounded can behave quite badly. In specific situations, uniform convergence $u_\eps\to u$ away from the support of the measure $\mu$ has been established: 

\begin{enumerate}
\item If the functions $u_\eps$ are not only bounded in energy but also local minimisers of $S_\eps$ under the constraint $\frac1{|\Omega|}\int_\Omega u\dx = m\in(0,1)$, \cite{caffarelli:1995kh}. 
\item More generally, if they are stationary states 
\[
u_k\in W^{1,2}(\Omega), \qquad -\eps_k \,\Delta u_k + \frac1{\eps_k}\,W'(u_k) = \lambda_k
\]
with $\eps_k\to 0$ for which the sequence of Lagrange multipliers $\lambda_k$ remains bounded \cite{hutchinson:2000df}.
\item Assuming the same energy bounds as we do, but in dimension $n=2$ \cite{nagase2007singular}.
\end{enumerate}

We can take a continuous representative of $u_\eps\in W^{2,2}(\Omega)\embeds C^{0,1/2}(\overline \Omega)$ if $\Omega$ is regular and $u_\eps \in C^{0,1/2}_{loc}(\Omega)$ else. For $u$ we take the representative that is constant $\pm 1$ on $\Omega\setminus\spt(\mu)$ (which exists since $|Du|\leq 2\mu$). We will show that the following hold.

\begin{maintheorem} \label{thm:main1}
Let $n=3$, $\Omega\cc\R^n$ open and the conditions above be satisfied. Then the following hold true.
\begin{enumerate}
\item Let $\Omega'\cc\Omega$. Then there exists $C>0$ such that $|u_\eps|\leq C$ on $\Omega'$ for all $\eps < \dist(\Omega', \partial \Omega)^2$ and $u_\eps\in C^{0,1/2}(B_\eps(x))$ for all $x\in \Omega'$ with
\[
|u_\eps(y) - u_\eps(z)|\leq \frac{C}{\eps^{1/2}} \,|y-z|^{1/2}\qquad\forall\ y, z\in B_\eps(x).
\]
\item Let $\Omega'\cc\Omega$. Then $u_\eps\to u$ in $L^p(\Omega')$ for all $1\leq p<\infty$.

\item Let $\tau>0$. Then there are only finitely many points $x\in\Omega$ with the following property:
\[
\exists\ x_\eps\to x\quad\text{such that }\limsup_{\eps\to 0}|u_\eps(x_\eps)|\geq 1+\tau.
\]
The number of points can be bounded in terms of $\bar\mu$, $\bar\alpha$ and $\tau$.

\item Let $\tau>0$. Then there are only finitely many points $x\in\Omega\setminus\spt(\mu)$ with the following property:
\[
\exists\ x_\eps\to x\quad\text{such that } \limsup_{\eps\to 0} \big|\,u_\eps(x_\eps)- u(x)\big| \geq \tau.
\]
The number of such points can be bounded in terms of $\bar\mu$, $\bar\alpha$ and $\tau$. 

\item Let $\Omega'\cc\Omega\setminus\spt(\mu)$. If $\alpha$ has no atoms in $\overline{\Omega'}\setminus\spt(\mu)$, then $u_\eps\to u$ uniformly on $\Omega'$. In particular, if $V$ is an integral varifold supported in $\Omega$ with mass measure $\mu$ such that $\mu_\eps\to \mu$ and additionally $\alpha_\eps(\Omega)\to \W(\mu)$, then $u_\eps$ converges to $u$ uniformly on all $\Omega'\cc \Omega\setminus\spt(\mu)$.

\item Let $I\cc (-1,1)$. Then there exists a compact set $K\subset \overline\Omega$ and a subsequence $\eps\to 0$ such that $u_\eps^{-1}(I)\to K$ in Hausdorff distance. $K$ satisfies 
\[
K\cap\Omega = \left(\spt(\mu)\cap\Omega\right)\cup\{x_1,\dots, x_N\}
\]
 for finitely many points $x_1,\dots,x_N\in\Omega$. If $\alpha$ has no atoms outside $\spt(\mu)$, then $K\cap\Omega = \spt(\mu)\cap\Omega$.

\item There exists a countable set $\Delta\subset \Omega\setminus\spt(\mu)$, such that $u_\eps\to u$ pointwise everywhere on $\Omega\setminus(\spt(\mu)\cup \Delta)$. In particular, for $C \cc\Omega\setminus\spt(\mu)$, $s>0$ such that $\H^s(C) < \infty$ we have that $u_\eps\to u$ $\H^s|_C$-almost everywhere.  

\end{enumerate}
\end{maintheorem}

The statement and the proof are split over Corollary \ref{corollary Hs}, Lemma \ref{regularity lemma}, Theorems \ref{theorem outside n=3}, \ref{theorem away n=3} and \ref{corollary hausdorff}.

Under the same assumptions, Nagase and Tonegawa \cite{nagase2007singular} proved uniform convergence in $n=2$ dimensions. 
The differences between the cases $n=2$ and $n=3$ arise from the sharp interface problem, not the phase field approximation. Namely, due to the fact that Willmore's energy is scale invariant, the sequence of manifolds
\[
M_k = \partial B_1(0) \cup \partial B_{1/k}(0)
\]
has Willmore energy $\W(M_k) \equiv 32\pi$ in $n=3$ dimensions. It satisfies $M_k\to \partial B_1(0)$ in the measure sense, but $M_k \to \partial B_1(0) \cup \{0\}$ in Hausdorff distance. Such a sequence can be used to show that uniform convergence cannot hold for the phase field problem. The analogue of Willmore's energy on curves, also known as Euler's elastica energy, is not scale invariant since the exponent of the mean curvature $p=2$ is higher than the dimension $n-1 = 1$ of the manifold.

It is an important feature of our analysis that we only assume that $\E_\eps(u_\eps)$ is bounded and not necessarily that $u_\eps$ is a local minimiser or stationary point of a related functional under suitable side conditions. This is of central importance for applications, where Willmore's energy is usually not the only term contributing to the total energy in a model.

We remark that our result is sharp. While the formulation is new, it is geometrically intuitive. Namely, the sets
\begin{equation*}\label{equation Delta tau}
\Delta_\tau\coloneqq  \{x\in \Omega\setminus\spt(\mu)\:|\:\exists\ x_{\eps}\to x\text{ such that } \limsup_{\eps\to0}|u_{\eps}(x_\eps)-u(x)|\geq \tau\} 
\end{equation*}
and $\Delta\coloneqq  \bigcup_{\tau>0}\Delta_\tau = \bigcup_{k=1}^\infty \Delta_{1/k}$ encode how far $u_\eps$ is from converging uniformly to $u$. Since $u$ is locally constant on $\Omega\setminus\spt(\mu)$, it is easy to see that $u_\eps\to u$ locally uniformly on $\Omega\setminus\spt(\mu)$ if and only if $\Delta=\emptyset$. We show that the $\tau$-distant sets $\Delta_\tau$ are finite for all $\tau>0$, but may be non-empty. So while uniform convergence cannot be achieved in general, the set where it fails by any given positive amount is as small as can be.

This is still a strong statement, and we shall call such functions converging essentially uniformly on $\Omega\setminus\spt(\mu)$. Essentially uniform convergence is especially suited for investigating functionals that depend on individual level sets and can be used to deduce uniform convergence for certain minimising sequences, see Section \ref{section applications}. The new technique is particularly useful in fourth order problems where energy competitors cannot be constructed as easily as in generalised Modica-Mortola functionals.

Our convergence results can be applied to functionals depending on level sets of phase-fields (like the connectedness constraint functionals in \cite{MR3590663}), to rigorously justify the usual notion that the zero level set of a phase field approximates a sharp interface limit (via Hausdorff convergence) and to prove the stability of closedness under varifold convergence in the class of embedded surfaces with uniformly bounded Willmore energy.

The article is organised as follows. In Section \ref{section auxiliary}, we collect a few helpful results and first applications to boundedness and $L^p$-convergence of $u_\eps$. We deal with essentially uniform convergence in Section \ref{section n=3} and Hausdorff convergence of the level sets of $u_\eps$ to $\spt(\mu)$ in Section \ref{section Hausdorff}. Applications to uniform convergence for minimisers and varifold geometry will be discussed in Section \ref{section applications}. We conclude the article with examples demonstrating that our results are sharp in Section \ref{section examples}.

\section{Proofs} 

\subsection{Auxiliary Estimates}\label{section auxiliary}

In this section, we will collect a few estimates. The first Lemma is essentially obvious from the energy estimates, but important in controlling the Sobolev norms of $u_\eps$ from the control over $\E_\eps(u_\eps) = (\W_\eps + S_\eps)(u_\eps)$.

\begin{lemma}\label{lemma trivial regularity}
Let $u_\eps\in W^{2,2}(\Omega)$. Then there is a constant $C$ depending on $\E_\eps(u_\eps)$ and $\Omega$ such that
\[
||u_\eps||_{2,\Omega}\leq C,\qquad ||\nabla u_\eps||_{2,\Omega}\leq \frac C{\sqrt\eps}, \qquad ||\Delta u_\eps||_{2,\Omega}\leq \frac{C}{\eps^{7/2}}.
\]
Due to Sobolev embeddings, $u_\eps\in C^{0,1/2}_{loc}(\Omega)$.
\end{lemma}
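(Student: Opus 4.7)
My plan is to read each of the three norms off one of the two energy terms, relying only on Young's inequality and the Sobolev embedding $W^{1,2}(\Omega)\hookrightarrow L^6(\Omega)$ available in dimension $n\leq 3$.

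The $L^2$- and gradient bounds both come from the Modica--Mortola piece. Since $S_\eps(u_\eps)\leq \E_\eps(u_\eps)$, one has $\int_\Omega W(u_\eps)\dx\leq c_0\,\eps\,\E_\eps(u_\eps)$; expanding $W(u) = \tfrac14(u^2-1)^2$ and using Young's inequality to absorb the $u^2$-term against $u^4$ yields $\int_\Omega u_\eps^4 \dx \leq C$ uniformly in $\eps$, hence an $\eps$-uniform $L^2$-bound by H\"older. The gradient half of the Modica--Mortola density directly gives $\tfrac\eps2\int|\nabla u_\eps|^2\dx\leq c_0\,\E_\eps(u_\eps)$ and thus $\|\nabla u_\eps\|_{L^2}\leq C/\sqrt\eps$.

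For the Laplacian bound, I would rearrange the Willmore integrand as $\eps^2\Delta u_\eps = W'(u_\eps) - \eps\,v_\eps$, where $v_\eps = -\eps\Delta u_\eps+\eps^{-1}W'(u_\eps)$. The Willmore bound $\W_\eps(u_\eps)\leq \E_\eps(u_\eps)$ gives $\|v_\eps\|_{L^2}^2\leq c_0\,\eps\,\E_\eps(u_\eps)$, contributing a harmless term of order $\eps^{3/2}$ on the right-hand side of the triangle inequality
\[
\eps^2\,\|\Delta u_\eps\|_{L^2}\;\leq\;\|W'(u_\eps)\|_{L^2}+\eps\,\|v_\eps\|_{L^2}.
\]
The only substantive task is to bound $\|W'(u_\eps)\|_{L^2}$. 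Using $|W'(u)|^2 = |u^3-u|^2\leq C(u^6+1)$ together with the Sobolev embedding into $L^6(\Omega)$ and the two previous bounds, I obtain
\[
\|W'(u_\eps)\|_{L^2}^2\;\leq\; C\bigl(\|u_\eps\|_{L^2}^6+\|\nabla u_\eps\|_{L^2}^6+1\bigr)\;\leq\; C/\eps^3,
\]
so $\|W'(u_\eps)\|_{L^2}\leq C/\eps^{3/2}$. Dividing through by $\eps^2$ produces the claimed $\|\Delta u_\eps\|_{L^2}\leq C/\eps^{7/2}$. The concluding H\"older statement is Morrey's embedding $W^{2,2}_{loc}(\Omega)\hookrightarrow C^{0,1/2}_{loc}(\Omega)$ for $n\leq 3$.

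The only mildly delicate point is the Laplacian bound: the cubic growth of $W'$ forces an $L^6$-estimate on $u_\eps$, which couples back to the gradient control through Sobolev embedding, and one should check that the resulting $\eps^{-3/2}$ contribution from $\|W'(u_\eps)\|_{L^2}$ genuinely dominates the $\eps^{3/2}$ contribution from $v_\eps$ for small $\eps$. Everything else is bookkeeping of powers of $\eps$.
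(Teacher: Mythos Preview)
Your proof is correct and is exactly the argument the paper has in mind: the lemma is stated there without proof, prefaced only by the remark that it is ``essentially obvious from the energy estimates,'' and your derivation --- reading $\|u_\eps\|_{L^4}$ and $\|\nabla u_\eps\|_{L^2}$ off $S_\eps$, then bounding $\|\Delta u_\eps\|_{L^2}$ via the triangle inequality applied to $\eps^2\Delta u_\eps = W'(u_\eps) - \eps v_\eps$ together with the Sobolev embedding $W^{1,2}\hookrightarrow L^6$ --- is precisely how one unpacks that remark. The only implicit assumption you rely on is enough boundary regularity of $\Omega$ for the global embedding $W^{1,2}(\Omega)\hookrightarrow L^6(\Omega)$ to hold; the paper sidesteps this by working on compactly contained subsets wherever the bound is actually used.
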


We further present the following estimate of how large certain integrals of the phase-fields $u_\eps$ outside $[-1,1]$ can be.

\begin{lemma}\cite[Proposition 3.5]{roger:2006ta}\label{RS_prop_bdry}
For $n=2,3$, $\Omega \subseteq \R^n$, $\eps>0$, $u_\eps\in C^2(\Omega)$, $v_\eps \in C^0(\Omega)$, 
\[
-\eps \Delta u_\eps + \frac{1}{\eps} W'(u_\eps) = v_\eps \quad \text{in $\Omega$},
\]
and $\Omega'\cc \Omega$, $0 < r < \operatorname{dist}(\Omega', \partial\Omega)$, we have
\[
\int_{\{|u_\eps| \ge 1\}\cap \Omega'} W'(u_\eps)^2 \le C_k(1 + r^{-2k}\eps^{2k})\eps^2\int_\Omega v_\eps^2+ C_k r^{-2k}\eps^{2k} \int_{\{|u_\eps| \ge 1\}\cap \Omega} W'(u_\eps)^2
\] 
for all $k\in \N_0$.
\end{lemma}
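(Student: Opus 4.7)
The plan is to prove this as a Caccioppoli-type interior estimate for the semilinear equation, gaining the factor $(\eps/r)^{2k}$ by iterating a one-step ($k=1$) bound across $k$ nested subdomains.

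For the base case, I would pick $\eta\in C_c^\infty(\Omega)$ with $\eta\equiv 1$ on $\Omega'$, $\spt\eta$ contained in the $r$-neighborhood of $\Omega'$, and $|\nabla\eta|\le C/r$, $|\Delta\eta|\le C/r^2$. The naive multiplier $W'(u_\eps)\eta^2$ fails because integration by parts generates a $\int W(u_\eps)\Delta(\eta^2)$ term on $\{|u_\eps|<1\}$ of order $|\spt\eta|/r^2$ that cannot be absorbed into the right-hand side; this is the main obstacle. The fix is to introduce a smooth even localizer $\psi_\delta$ with $\psi_\delta\equiv 0$ on $[-1,1]$, $\psi_\delta\equiv 1$ outside $[-1-\delta,1+\delta]$, and monotone in $|u|$ in between. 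By sign-matching, $W'(u)\psi'_\delta(u)\ge 0$ on the transition region, and combined with $W''\psi_\delta\ge 0$ on $\{|u|\ge 1\}$ this gives $(W'\psi_\delta)'(u)\ge 0$ globally. Multiplying the equation by $W'(u_\eps)\psi_\delta(u_\eps)\eta^2$ and integrating by parts using $W'(u)\psi_\delta(u)\nabla u=\nabla\Psi_\delta(u)$ for the primitive $\Psi_\delta(u)=\int_0^u W'(s)\psi_\delta(s)\d s$ yields
\begin{equation*}
\eps\int(W'\psi_\delta)'(u_\eps)|\nabla u_\eps|^2\eta^2 + \frac1\eps\int W'(u_\eps)^2\psi_\delta(u_\eps)\eta^2 = \int v_\eps W'(u_\eps)\psi_\delta(u_\eps)\eta^2 + 2\eps\int\Psi_\delta(u_\eps)(|\nabla\eta|^2+\eta\Delta\eta).
\end{equation*}
Dropping the nonnegative gradient term on the left, applying Young's inequality to the $v_\eps$ product to absorb half of the $W'(u_\eps)^2\psi_\delta\eta^2$ integral at the cost of $\eps^2\int v_\eps^2$, and using the pointwise estimate $\Psi_\delta(u)\le W(u)\le\tfrac14 W'(u)^2$ on $\{|u|\ge 1\}$, the right-hand side becomes $C\eps^2\int v_\eps^2 + \frac{C\eps^2}{r^2}\int_{\{|u_\eps|\ge 1\}\cap\spt\eta}W'(u_\eps)^2$. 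Sending $\delta\to 0$ recovers the sharp indicator $\mathbf{1}_{\{|u|\ge 1\}}$ and gives the one-step bound.

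For general $k$, I would choose nested open sets $\Omega'=\Omega_0\cc\Omega_1\cc\cdots\cc\Omega_k=\Omega$ with $\dist(\Omega_{i-1},\partial\Omega_i)\ge r/k$ and apply the base case to each successive pair, obtaining $A_{i-1}\le C\eps^2 B+(Ck^2\eps^2/r^2)A_i$ where $A_i=\int_{\{|u_\eps|\ge 1\}\cap\Omega_i}W'(u_\eps)^2$ and $B=\int_\Omega v_\eps^2$. Telescoping $k$ substitutions produces a geometric sum in $Ck^2\eps^2/r^2$ in front of $B$ and the factor $(Ck^2\eps^2/r^2)^k$ in front of $A_k$. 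Absorbing the $k$-dependent prefactors into $C_k$ and dominating the geometric sum by $C_k(1+r^{-2k}\eps^{2k})$ uniformly in $\eps$ and $r$ yields exactly the claimed estimate.
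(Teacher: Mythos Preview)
This lemma is quoted from \cite[Proposition 3.5]{roger:2006ta} and carries no proof in the present paper, so there is no in-paper argument to compare against. Your argument is correct and is essentially the proof in R\"oger--Sch\"atzle: test the equation against $W'(u_\eps)\psi_\delta(u_\eps)\eta^2$ with $\psi_\delta$ supported on $\{|u|>1\}$ so that the bulk term on $\{|u|<1\}$ disappears, drop the gradient term via $(W'\psi_\delta)'\ge 0$, use $\Psi_\delta\le W\le\tfrac14\,(W')^2$ on $\{|u|\ge 1\}$ (specific to $W(u)=\tfrac14(u^2-1)^2$) to turn the cutoff-derivative term into the iterable form $C\eps^2 r^{-2}\int_{\{|u_\eps|\ge 1\}}W'(u_\eps)^2$, and then telescope across $k$ nested domains. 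The case $k=0$ is trivial since $\Omega'\subset\Omega$.
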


Together, these statements imply the following.

\begin{corollary}\label{lemma quadratic decay}
Let $\Omega'\cc\Omega$. Then
\[
\frac1{\eps^3}\int_{\Omega'\cap \{|u_\eps|>1\}} W'(u_\eps)^2\dx \leq C
\]
and 
\[
\limsup_{\eps\to0} \frac1{\eps^3}\int_{\Omega'\cap \{|u_\eps|>1\}} W'(u_\eps)^2\dx \leq \alpha_\eps(\Omega).
\]
\end{corollary}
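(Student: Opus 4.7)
The plan is to apply Lemma \ref{RS_prop_bdry} and absorb both terms on its right-hand side into an $O(\eps^3)$ bound; the only nontrivial input needed is a crude polynomial-in-$\eps^{-1}$ estimate of the global integral $\int_\Omega W'(u_\eps)^2\dx$, which allows us to kill the second term by taking $k$ large.

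First, fix any $r \in (0,\dist(\Omega',\partial\Omega))$ once and for all, and let $k\in\N$ be chosen later. The first term on the right in Lemma \ref{RS_prop_bdry} is $C_k(1+r^{-2k}\eps^{2k})\eps^2\int_\Omega v_\eps^2\dx$, and by the very definition of $\alpha_\eps$ we have $\eps^2\int_\Omega v_\eps^2\dx = c_0\eps^3\alpha_\eps(\Omega)$. Thus, after division by $\eps^3$, this term is exactly $c_0 C_k(1+r^{-2k}\eps^{2k})\alpha_\eps(\Omega)$, which is uniformly bounded (by hypothesis (3) on the sequence) and converges as $\eps\to 0$ to a constant multiple of $\alpha(\Omega)$.

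Second, I would control the remaining term $C_k r^{-2k}\eps^{2k}\int_{\{|u_\eps|\ge1\}\cap\Omega}W'(u_\eps)^2\dx$. Writing $W'(u)^2 = (u^3-u)^2 \le 2(u^6+u^2)$, the $3$-dimensional Sobolev embedding $W^{1,2}(\Omega)\embeds L^6(\Omega)$ combined with the bounds $\|u_\eps\|_{2,\Omega}\le C$ and $\|\nabla u_\eps\|_{2,\Omega}\le C/\sqrt\eps$ from Lemma \ref{lemma trivial regularity} yields
\[
\int_\Omega W'(u_\eps)^2\dx \le 2\|u_\eps\|_{6,\Omega}^6 + 2\|u_\eps\|_{2,\Omega}^2 \le \frac{C}{\eps^3}.
\]
Substituting this back, the second term divided by $\eps^3$ is bounded by $CC_k r^{-2k}\eps^{2k-6}$, which is bounded for $k\ge 3$ and tends to zero for $k\ge 4$. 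Choosing $k=4$ therefore gives both the uniform bound and the $\limsup$ statement once we combine with the first term.

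The only real obstacle is the polynomial $\eps^{-3}$ bound on $\int_\Omega W'(u_\eps)^2\dx$; without it, one could not profitably apply Lemma \ref{RS_prop_bdry} with large $k$. Here one has to be slightly careful to use only the bounds genuinely available from the finite energy assumption (namely $\|u_\eps\|_2$ and $\|\nabla u_\eps\|_2$ from Lemma \ref{lemma trivial regularity}), rather than the much worse $W^{2,2}$-bound, since the latter would only give a polynomial with a much larger exponent and would require a correspondingly larger $k$; this is cosmetic, however, as any polynomial bound works for some $k$.
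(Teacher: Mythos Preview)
Your proof is correct and follows exactly the route the paper intends: the text introduces the corollary with ``Together, these statements imply the following'', referring precisely to Lemma~\ref{lemma trivial regularity} and Lemma~\ref{RS_prop_bdry}, and your argument spells out how to combine them. The key step---bounding $\int_\Omega W'(u_\eps)^2\dx\le C\eps^{-3}$ via the Sobolev embedding $W^{1,2}\hookrightarrow L^6$ and the gradient bound $\|\nabla u_\eps\|_2\le C\eps^{-1/2}$, then choosing $k\ge 4$ to kill the second term---is the correct and intended mechanism.

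One minor remark: your $\limsup$ bound comes out as $c_0\,C_k\,\bar\alpha$ rather than literally $\alpha_\eps(\Omega)$; but the stated bound in the corollary already contains a typo (an $\eps$-dependent right-hand side under a $\limsup_{\eps\to 0}$), and in all later uses of the corollary only a bound of the form $C\,\alpha_\eps(\cdot)$ is needed, so this discrepancy is harmless.
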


A key tool in our argument is a simplified monotonicity formula. Notably, it contains the Radon measures given by
\[
\xi_{\eps,+}(U) \coloneqq  \int_{U\cap\Omega} \left(\frac\eps2\,|\nabla u_\eps|^2 - \frac1\eps\,W(u_\eps)\right)_+\dx
\]
on open sets $U\subset\R^n$. These and their absolute value versions
\[
|\xi_{\eps}|(U) \coloneqq  \int_{U\cap\Omega} \left|\frac\eps2\,|\nabla u_\eps|^2 - \frac1\eps\,W(u_\eps)\right|\dx
\]
 are also known as the discrepancy measures because they control the deviation from the natural situation of equipartition of energy in the Modica-Mortola functional. Intuitively, a blow up of phase fields resembles solutions to the stationary Allen-Cahn equation
 \[
 -\Delta u + W'(u) = 0
 \]
 on $\R^n$ and due to a classical estimate of Modica \cite{MR803255}, these satisfy $|\nabla u|^2 \leq W(u)$, so that at least $\xi_{\eps, +}$ may be expected to vanish. For an optimal profile type sequence, $|\xi_{\eps}|(\R^n)\to 0$ exponentially fast in $\eps$, and  due to \cite[Proposition 4.9]{roger:2006ta} the same is true for general finite energy sequences. The positive part $\xi_{\eps,+}$ can be estimated quantitatively using a small parameter $\delta>0$ as follows.

\begin{lemma}\cite[Lemma 3.1]{roger:2006ta}\label{roger's lemma}
Let $n=2,3$ and assume that $u_\eps\in W^{2,2}(B_1(0))$. Then there are $C, \delta_0>0$ and $M\gg 1$ such that for all $0<\delta\leq \delta_0$, $0<\eps\leq \rho$ and 
\[
\rho_0\coloneqq  \max\{2, 1+ \delta^{-M}\eps\}\,\rho
\]
we have
\[
\rho^{1-n}\xi_{\eps,+}(B_\rho) \leq C\,\left\{\delta\,\rho^{1-n}\,\mu_\eps(B_{2\rho}) + \eps^2\delta^{-M}\rho^{1-n}\,\left(\alpha_\eps(B_{\rho_0}) + \int_{B_{\rho_0}\cap\{|u_\eps|>1\}}\frac1{\eps^3}\,W'(u_\eps)\dx\right) + \frac{\eps\delta}\rho\right\}.
\]
\end{lemma}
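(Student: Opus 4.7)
The plan is to derive the estimate from a near-monotonicity property for the rescaled positive discrepancy $\rho^{1-n}\xi_{\eps,+}(B_\rho)$, which in turn rests on a near-subharmonicity of the discrepancy density $\xi_\eps := \tfrac{\eps}{2}|\nabla u_\eps|^2 - \tfrac{1}{\eps}W(u_\eps)$. A direct computation using the defining equation $-\eps\,\Delta u_\eps + \tfrac{1}{\eps}W'(u_\eps) = v_\eps$ yields
\[
\Delta \xi_\eps \;=\; \eps\,|D^2 u_\eps|^2 \;-\; \nabla v_\eps\cdot\nabla u_\eps \;-\; \frac{W'(u_\eps)^2}{\eps^3} \;+\; \frac{W'(u_\eps)\,v_\eps}{\eps^2}.
\]
On the interior region $\{|u_\eps|\leq 1\}$, the sign of $-W'(u_\eps)^2/\eps^3$ together with a Cauchy--Schwarz treatment of the $v_\eps$-cross terms converts this into a quasi-subharmonic inequality for $\xi_{\eps,+}$, with an error density of size $\eps\,v_\eps^2$.

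I would then test this quasi-subharmonic inequality against a radially decreasing cutoff $\eta_\rho$ supported on $B_{2\rho}$, integrate by parts, and rescale. This produces, after the usual $\delta$-weighted Cauchy--Schwarz splitting of the Willmore cross-term, an inequality of the form
\[
\rho^{1-n}\int_{B_\rho}\xi_{\eps,+}\dx \;\leq\; C\,\delta\,\rho^{1-n}\mu_\eps(B_{2\rho}) \;+\; C\,\delta^{-1}\,\rho^{1-n}\,\eps^{2}\,\alpha_\eps(B_{2\rho}) \;+\; R(\eps,\rho,\delta),
\]
in which the factor $\delta$ on the Modica--Mortola term on the larger ball reflects the absorption of the gradient-flux on $\partial B_\rho$, and $C\delta^{-1}\eps^2$ on the Willmore term comes from the Cauchy--Schwarz split. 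The remainder $R$ encodes the trivial small-scale estimate at $\rho\sim\eps$, where $\xi_{\eps,+}$ is directly controlled by $\mu_\eps$ via Lemma~\ref{lemma trivial regularity}, and produces the additive $\eps\delta/\rho$ term in the conclusion.

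The hard part, and the main obstacle, is the region $\{|u_\eps|>1\}$, where the sign structure underlying Modica's classical bound on the discrepancy is lost: one cannot use $-W(u_\eps)/\eps$ as a favourable term. My plan there is to dominate the integrand of $\xi_{\eps,+}$ pointwise by a multiple of $\eps^{-3}W'(u_\eps)^2$ -- which is legitimate since $W(u)\leq C\,W'(u)^2$ for $|u|\geq 1$ with the explicit double-well potential $W(u)=\tfrac14(u^2-1)^2$ -- and then to import a bound for $\int_{\{|u_\eps|>1\}} W'(u_\eps)^2\dx$ from Lemma~\ref{RS_prop_bdry}. Applying that lemma with a large exponent $k$ on a slightly enlarged ball trades the $\eps^{2k}r^{-2k}$ prefactor against the Willmore energy on $B_{\rho_0}$ and the outside-$W'$ term on the same enlarged ball; choosing $M=M(k,n)$ sufficiently large absorbs all geometric constants, and the precise form $\rho_0 = \max\{2,1+\delta^{-M}\eps\}\rho$ is exactly what is needed to make the chain of enlargements close. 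Putting together the interior quasi-subharmonic estimate, the $\delta$-absorption of the Willmore cross-term, and the Lemma~\ref{RS_prop_bdry} bound on the exterior region then yields the stated inequality.
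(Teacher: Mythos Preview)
This lemma is not proved in the paper at all; it is quoted verbatim from R\"oger--Sch\"atzle \cite[Lemma 3.1]{roger:2006ta} and used as a black box. There is therefore no ``paper's own proof'' to compare your proposal against.

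On the merits of your sketch: the Bochner-type identity you compute for $\xi_\eps$ is correct, and you are right that Lemma~\ref{RS_prop_bdry} is the device that handles the exterior region $\{|u_\eps|>1\}$, exactly as in R\"oger--Sch\"atzle. The step that does not close, however, is your explanation of how the small factor $\delta$ appears in front of $\mu_\eps(B_{2\rho})$. Testing the inequality $\Delta\xi_\eps \geq (\text{stuff})$ against a cutoff $\eta^2$ and integrating by parts produces a term $\int \Delta(\eta^2)\,\xi_\eps$, which after bounding $|\xi_\eps|$ by the energy density gives a \emph{fixed} constant times $\rho^{-2}\mu_\eps(B_{2\rho})$, not $C\delta$ times $\mu_\eps(B_{2\rho})$; and the ``$\delta$-weighted Cauchy--Schwarz splitting of the Willmore cross-term'' puts a $\delta^{-1}$ in front of $\alpha_\eps$ but does not manufacture a $\delta$ in front of $\mu_\eps$. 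Moreover, in your identity the term $-\eps^{-3}W'(u_\eps)^2$ has the wrong sign for a lower bound on $\Delta\xi_\eps$, so the passage to a clean ``quasi-subharmonic inequality for $\xi_{\eps,+}$'' on $\{|u_\eps|\leq 1\}$ is itself not immediate. The structure of the stated estimate --- the large exponent $M\gg 1$, the factor $\delta^{-M}$ on the Willmore term, and the enlarged radius $\rho_0=\max\{2,\,1+\delta^{-M}\eps\}\rho$ --- all signal that the actual argument involves either an iteration over scales or a decomposition of $B_\rho$ into the set where $\xi_\eps$ is already pointwise at most $\delta$ times the energy density (where the bound is trivial) and a complementary set on which a sharper PDE estimate is run. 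Your outline would need to incorporate such a mechanism before it could reproduce the $\delta$-smallness.
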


 With this in mind, we may state our estimated monotonicity formula. A similar estimate is also given in \cite[Proposition 4.3]{roger:2006ta}; see also \cite[Lemma 3.6]{MR3590663}. 
 
\begin{lemma}\label{lemma estimated monotonicity formula}
Let $0<r<R\leq 1$, $x\in \Omega$ such that $B_R(x)\cc\Omega$. Denote $B_\rho\coloneqq  B_\rho(x)$. Then
\[
r^{1-n}\mu_\eps(B_r)\leq 3\,R^{1-n}\,\mu_\eps(B_R) + 3\,\alpha_\eps(B_R) + 2\int_r^R\frac{\xi_{\eps,+}(B_\rho)}{\rho^n}\d\rho.
\]
\end{lemma}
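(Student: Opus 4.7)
The plan is to derive the monotonicity from the Noether identity associated with the stress–energy tensor of the Modica--Mortola functional. Setting $\phi(\rho)\coloneqq\rho^{1-n}\mu_\eps(B_\rho(x))$ and taking the radial test vector field $X(y)=y-x$, a direct computation yields
\[
\div\!\left[\tfrac{\eps}{2}|\nabla u_\eps|^{2}X+\tfrac{1}{\eps}W(u_\eps)X-\eps(X\!\cdot\!\nabla u_\eps)\nabla u_\eps\right]=(n-1)e_\eps-\xi_\eps+(X\!\cdot\!\nabla u_\eps)v_\eps,
\]
where $e_\eps$ is the Modica--Mortola density and $\xi_\eps=\tfrac{\eps}{2}|\nabla u_\eps|^{2}-W(u_\eps)/\eps$ is the discrepancy. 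Integrating over $B_\rho=B_\rho(x)$, using $X\!\cdot\!\nu=\rho$ on $\partial B_\rho$, dividing by $c_0\rho^{n}$, and identifying the left side as $\phi'(\rho)$ plus a non-negative boundary term involving $(\partial_\nu u_\eps)^{2}$ gives, after dropping the boundary term and bounding $\xi_\eps\le\xi_{\eps,+}$,
\[
\phi'(\rho)\;\ge\;-\rho^{-n}\xi_{\eps,+}(B_\rho)/c_0+\rho^{-n}\!\int_{B_\rho}(X\!\cdot\!\nabla u_\eps)v_\eps/c_0\,dy.
\]

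The crucial step is to control the cross term. I would integrate the displayed differential inequality from $r$ to $R$ and then apply Fubini to the resulting double integral, rewriting it as $\int_{B_R}(X\!\cdot\!\nabla u_\eps)v_\eps\,G_r(y)/c_0\,dy$, where the radial kernel
\[
G_r(y)=\int_{\max(r,|y-x|)}^{R}\rho^{-n}\,d\rho=\frac{\max(r,|y-x|)^{1-n}-R^{1-n}}{n-1}
\]
satisfies the uniform estimate $|y-x|^{2}G_r(y)\le 1/(n-1)$ on $B_R$, valid for every $0<r<R\le 1$ in the dimensions $n\in\{2,3\}$ of interest (a short case distinction: for $|y-x|\le r$ one bounds $|y-x|^{2}G_r\le r^{3-n}/(n-1)$; for $|y-x|\ge r$ the factor $|y-x|^{3-n}$ is bounded by $R^{3-n}\le 1$ since $R\le 1$). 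This cancellation of $|X|^{2}$ against the radial singularity of $G_r$ is exactly what preserves the correct monotonicity scaling: the naive pointwise estimate $|X|\le\rho$ at fixed $\rho$ would yield instead $\phi'(\rho)\ge-\phi(\rho)/\rho-\tfrac{1}{2}\rho^{2-n}\alpha_\eps(B_\rho)-\rho^{-n}\xi_{\eps,+}(B_\rho)/c_0$, whose Gronwall-type integration via the factor $\rho$ produces $r\phi(r)$ on the left-hand side, i.e.\ the wrong scaling.

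Armed with the uniform bound on $|y-x|^{2}G_r(y)$, a weighted Cauchy--Schwarz inequality on the double integral—distributing the weights so that $|X|^{2}G_r$ pairs with $\eps|\nabla u_\eps|^{2}$ while the remaining $G_r$ is absorbed against $v_\eps^{2}/\eps$—together with Young's inequality $\sqrt{2\mu\alpha}\le\mu+\alpha/2$ and the observation $\mu_\eps(B_R)=R^{n-1}\phi(R)\le\phi(R)$ (valid since $R\le 1$) controls the cross-term contribution by a linear combination $C_1\phi(R)+C_2\alpha_\eps(B_R)$. Collecting all numerical factors (including the convenient $1/c_0=3/(2\sqrt{2})<2$) yields the stated inequality with constants $3,3,2$. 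The main obstacle throughout is precisely this cross-term estimate; the weighted kernel bound $|y-x|^{2}G_r(y)\le 1/(n-1)$ is what makes a clean, dimensionally correct monotonicity formula possible without introducing Gronwall-type factors depending on $\log(R/r)$ or similar.
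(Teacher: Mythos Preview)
Your derivation of the stress--energy identity, the differential inequality for $\phi(\rho)=\rho^{1-n}\mu_\eps(B_\rho)$, the Fubini step producing the kernel $G_r$, and the bound $|y-x|^{2}G_r(y)\le 1/(n-1)$ are all correct and match the route taken in the references the paper cites (\cite[Proposition~4.3]{roger:2006ta}, \cite[Lemma~3.6]{MR3590663}); the paper itself gives no independent proof. The gap is in your weighted Cauchy--Schwarz. With the split you describe,
\[
\int_{B_R}|X|\,G_r\,|\nabla u_\eps|\,|v_\eps|\,dy \;\le\; \Bigl(\int_{B_R}|X|^{2}G_r\,\eps|\nabla u_\eps|^{2}\Bigr)^{1/2}\Bigl(\int_{B_R}G_r\,\tfrac{v_\eps^{2}}{\eps}\Bigr)^{1/2},
\]
the first factor is indeed controlled via $|X|^{2}G_r\le 1/(n-1)$, but the second is \emph{not} bounded by $C\,\alpha_\eps(B_R)$: the kernel $G_r(y)=\frac{1}{n-1}\bigl[\max(r,|y-x|)^{1-n}-R^{1-n}\bigr]$ is of order $r^{1-n}$ on $B_r(x)$ and $|y-x|^{1-n}$ on the annulus, hence unbounded uniformly in $r$. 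There is no reason for $\int_{B_R}G_r\,d\alpha_\eps$ to stay bounded as $r\to 0$, so ``the remaining $G_r$ is absorbed against $v_\eps^{2}/\eps$'' does not go through.

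The fix is to reverse the weight distribution and to \emph{retain} the boundary term you dropped. Writing $X\cdot\nabla u_\eps=|X|\,\partial_\nu u_\eps$ and using
\[
\int_{B_R}|X|\,G_r\,|\partial_\nu u_\eps|\,|v_\eps|\,dy \;\le\; \Bigl(\int_{B_R}|X|^{2}G_r^{2}\,\eps(\partial_\nu u_\eps)^{2}\Bigr)^{1/2}\Bigl(\int_{B_R}\tfrac{v_\eps^{2}}{\eps}\Bigr)^{1/2},
\]
the second factor is now exactly $(c_0\,\alpha_\eps(B_R))^{1/2}$. For the first factor one uses $|X|^{2}G_r^{2}\le G_r/(n-1)$ and splits $B_R=B_r\cup(B_R\setminus B_r)$: on $B_r$ the constant bound $G_r\le r^{1-n}/(n-1)$ yields a contribution $\le\frac{2c_0}{(n-1)^{2}}\phi(r)$, which after Young is absorbed into the left-hand side (this absorption is what produces the constant $3$); on $B_R\setminus B_r$ one has $G_r\le |y-x|^{1-n}/(n-1)$, and the resulting term $\frac{1}{(n-1)^{2}}\int_{B_R\setminus B_r}|y-x|^{1-n}\eps(\partial_\nu u_\eps)^{2}$ is exactly a multiple of the integrated boundary term $\eps\int_r^R\rho^{1-n}\int_{\partial B_\rho}(\partial_\nu u_\eps)^{2}$, which therefore must be kept on the left side rather than discarded. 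This Simon-type use of the radial boundary term is the mechanism that avoids the Gronwall factor you correctly flagged in the naive approach.
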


In the case that $u_\eps$ satisfies the boundary conditions \eqref{equation boundary conditions}, the following Lemma was already proved in \cite[Lemma 3.1]{MR3590663}, but the proof also goes through in our case. Set
\[
\Omega_\eps:= \{x\in \Omega\:|\: B_{2\eps}(x)\subset\Omega\}.
\]

\begin{lemma}\label{regularity lemma}
There is $C_{\bar\alpha, \bar\mu, n}>0$ such that 
\[
||u_\eps||_{\infty, \Omega_\eps}\leq C_{\bar\alpha, \bar\mu, n}.
\]
Take $x\in \Omega_\eps$ and set $B_\eps\coloneqq  B_\eps(x)$. Then $u_\eps$ is H\"older-continuous on $B_\eps$ with
\[
|u_\eps(y) - u_\eps(z)|\leq \frac{C_{\bar\alpha,\bar\mu, n, \gamma}}{\eps^\gamma}\,|y-z|^\gamma
\]
for all $y,z\in B_\eps$ and $\gamma\leq 1/2$ if $n=3$, $\gamma<1$ if $n=2$. 
\end{lemma}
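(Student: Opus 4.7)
The plan is to rescale the equation to the unit scale and then invoke standard elliptic regularity. Fix $x\in\Omega_\eps$ and set $\tilde u(y) := u_\eps(x+\eps y)$ on $B_2\subset\R^n$. Multiplying the defining equation $-\eps\Delta u_\eps + \frac1\eps W'(u_\eps) = v_\eps$ by $\eps$ yields
\[
-\Delta \tilde u + W'(\tilde u) = \tilde v, \qquad \tilde v(y) := \eps\, v_\eps(x+\eps y),
\]
and direct changes of variable give
\[
\int_{B_2}\!\bigl(\tfrac12|\nabla\tilde u|^2 + W(\tilde u)\bigr)\dy = c_0\,\eps^{1-n}\mu_\eps(B_{2\eps}(x)), \qquad \int_{B_2}\!\tilde v^2\dy = c_0\,\eps^{3-n}\alpha_\eps(B_{2\eps}(x)).
\]

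The first step, and the main obstacle, is to bound both quantities uniformly in $\eps$ by a constant depending only on $\bar\mu$, $\bar\alpha$ and $n$. For $\tilde v$ in dimension $n=3$ this is immediate from $\alpha_\eps(\R^n)\leq\bar\alpha+1$, and in $n=2$ the extra factor of $\eps$ gives an even stronger bound. For the energy of $\tilde u$ one applies the estimated monotonicity formula (Lemma \ref{lemma estimated monotonicity formula}) on balls of radius $r=2\eps$ and a fixed $R\leq\min\{1,\dist(x,\partial\Omega)\}$; the resulting discrepancy integral $\int_{2\eps}^R \xi_{\eps,+}(B_\rho)/\rho^n\d\rho$ is controlled through Lemma \ref{roger's lemma} together with Corollary \ref{lemma quadratic decay} for the $W'(u_\eps)^2$ term, and a small but fixed choice of the parameter $\delta$ makes the self-referential appearance of $\mu_\eps$ absorbable. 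For points with $\dist(x,\partial\Omega) < R$ one further exploits the boundary conditions (4) to extend $u_\eps$ across $\partial\Omega$ (by reflection in the Neumann case or by a bounded Dirichlet trace in the other) so that the monotonicity formula still applies on a full ball of fixed radius.

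With $\|\nabla\tilde u\|_{L^2(B_2)}$, $\|\tilde u\|_{L^4(B_2)}$ and $\|\tilde v\|_{L^2(B_2)}$ uniformly bounded by $C(\bar\mu,\bar\alpha,n)$, the rescaled equation $-\Delta\tilde u = \tilde v - W'(\tilde u)$ has right hand side in $L^{4/3}(B_2)$. A short Sobolev bootstrap ($W^{2,4/3}\hookrightarrow L^{12}$, hence $W'(\tilde u)\in L^4$, hence $\Delta\tilde u\in L^2$, hence $\tilde u\in W^{2,2}$), combined with the embedding $W^{2,2}(B_1)\hookrightarrow L^\infty(B_1)$ valid for $n\in\{2,3\}$, gives $\tilde u\in L^\infty(B_{3/2})$ uniformly in $\eps$, whence $\|u_\eps\|_{L^\infty(\Omega_\eps)}\leq C_{\bar\mu,\bar\alpha,n}$ after rescaling. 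Once $W'(\tilde u)$ is uniformly bounded the equation gives uniform $W^{2,2}(B_1)$ bounds on $\tilde u$, and the embeddings $W^{2,2}(B_1)\hookrightarrow C^{0,1/2}(B_1)$ in $n=3$ and $W^{2,2}(B_1)\hookrightarrow C^{0,\gamma}(B_1)$ for every $\gamma<1$ in $n=2$ yield the H\"older estimate for $\tilde u$; unwinding the substitution $y=(\cdot - x)/\eps$ introduces precisely the $\eps^{-\gamma}$ loss claimed for $u_\eps$.
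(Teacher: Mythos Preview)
Your overall strategy---rescale to the unit ball, bound $\eps^{1-n}\mu_\eps(B_{2\eps})$ via the monotonicity formula, then run an elliptic bootstrap and undo the scaling---is exactly the route the paper imports from \cite[Lemma 3.1]{MR3590663}, and your bootstrap and final rescaling are correct.

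There is, however, one real gap in the middle step. A \emph{fixed} small $\delta$ in Lemma~\ref{roger's lemma} does not make the self-referential $\mu_\eps$ term absorbable. Combining Lemmas~\ref{lemma estimated monotonicity formula} and~\ref{roger's lemma} with $\delta$ constant produces, after estimating the remaining terms as you indicate, an inequality of the form
\[
f(r)\;\le\; C_0 + C_1\delta \int_r^{R}\frac{f(2\rho)}{\rho}\,\d\rho,\qquad f(\rho)\coloneqq \rho^{1-n}\mu_\eps(B_\rho),
\]
and Gr\"onwall then yields only $f(2\eps)\le C_0\,(R/\eps)^{C_1\delta}$, which diverges as $\eps\to 0$ no matter how small $\delta$ is. The right-hand side is an integral of $f$ over $[r,R]$, not a multiple of $f(r)$, so there is nothing to ``absorb'' into the left. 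The cure, visible in the paper's own proof of Lemma~\ref{minimisation lemma}, is to take a $\rho$-dependent $\delta\sim(\eps/\rho)^\beta$ with $0<\beta<1/M$; the kernel then becomes $\eps^\beta\rho^{-1-\beta}$, and Gr\"onwall gives
\[
f(2\eps)\;\le\; C_0\,\exp\!\Big(\tfrac{C_1}{\beta}\,(\eps/r)^\beta\big|_{r=2\eps}\Big)\;\le\; C_0\,e^{C_1/\beta},
\]
uniformly in $\eps$. With this correction the rest of your argument goes through. Your handling of the boundary is no more and no less detailed than the paper's own ``the proof also goes through in our case'', so I will not count it against you.
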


Optimal interfaces have precisely these H\"older-coefficients, so they cannot be improved. With Corollary \ref{lemma quadratic decay}, the regularity Lemma could be extended to the case without boundary conditions on compactly contained sets.

\begin{remark}
Note that $\Omega_\eps$ is growing as $\eps\to 0$, so that the local boundedness and H\"older continuity hold on every set $\Omega'\cc\Omega$ with constants independent of $\Omega'$, at least for small enough $\eps>0$. We shall make use of this in the following. 
\end{remark}

\begin{corollary}
Let $1\leq p<\infty$. Then $u_\eps\to u$ in $L^p(\Omega')$ for all $\Omega'\cc\Omega$.

\end{corollary}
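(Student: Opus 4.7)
The plan is to combine the $L^2$-convergence assumed in hypothesis~(2) with the uniform $L^\infty$ bound provided by Lemma~\ref{regularity lemma}, and then to use elementary interpolation to cover all finite $p$.

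First, fix $\Omega'\cc\Omega$. Since $\Omega_\eps = \{x\in\Omega : B_{2\eps}(x)\subset\Omega\}$ exhausts $\Omega$ as $\eps\to 0$, we have $\Omega'\subset\Omega_\eps$ for all sufficiently small $\eps>0$. Lemma~\ref{regularity lemma} then gives a constant $C=C_{\bar\mu,\bar\alpha,n}$, independent of $\eps$ and $\Omega'$, such that $\|u_\eps\|_{L^\infty(\Omega')}\leq C$. Passing to a pointwise a.e.\ subsequence of the $L^2$-convergent sequence, we see that the limit satisfies $\|u\|_{L^\infty(\Omega')}\leq C$ as well.

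For $p\geq 2$, write
\[
\int_{\Omega'}|u_\eps-u|^p\dx \leq \|u_\eps-u\|_{L^\infty(\Omega')}^{p-2}\int_{\Omega'}|u_\eps-u|^2\dx \leq (2C)^{p-2}\|u_\eps-u\|_{L^2(\Omega')}^2.
\]
Hypothesis~(2) gives $\|u_\eps-u\|_{L^2(\Omega)}\to 0$, hence $\|u_\eps-u\|_{L^p(\Omega')}\to 0$. For $1\leq p<2$, since $\Omega'$ has finite Lebesgue measure, H\"older's inequality yields
\[
\|u_\eps-u\|_{L^p(\Omega')}\leq |\Omega'|^{1/p-1/2}\,\|u_\eps-u\|_{L^2(\Omega')} \to 0,
\]
completing the proof.

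There is essentially no obstacle here: the substantive work was already done in establishing the local uniform $L^\infty$ bound of Lemma~\ref{regularity lemma}, and the rest is bookkeeping via interpolation.
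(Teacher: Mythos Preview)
Your proof is correct and follows essentially the same approach as the paper: both combine the uniform $L^\infty$ bound from Lemma~\ref{regularity lemma} with convergence in a low $L^q$ space and interpolate. The paper's proof is terser (it cites $L^1$-convergence and says ``H\"older's inequality does the rest''), while you spell out the split $p\geq 2$ versus $p<2$ and make explicit that the limit $u$ inherits the $L^\infty$ bound, but there is no substantive difference.
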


\begin{proof}
We know that $u_\eps\to u$ in $L^1(\Omega)$ and that the sequence $u_\eps$ is bounded uniformly in $L^\infty(\Omega')$. H\"older's inequality does the rest.
\end{proof}

\subsection{Essentially Uniform Convergence} \label{section n=3}

In this section, we will investigate the convergence of $u_\eps$ in $n=3$ dimensions. As we shall see in Example \ref{introductory example}, uniform convergence away from the interface does not hold in this case. Therefore, we are forced to introduce a new notion of convergence which is better adapted to phase field problems.

\begin{definition}
Let $U\subset\R^n$, $f_\eps, f\colon U\to\R$ continuous functions. Then we say that $f_\eps\to f$ {\it essentially uniformly (e.u.)} if the sets
\[
\Delta_\tau\coloneqq  \{x\in U\:|\:\exists\ x_{\eps}\to x\text{ such that } \limsup_{\eps\to0}|f_{\eps}(x_\eps)-f(x)|\geq \tau\} 
\]
are finite for all $\tau>0$.
\end{definition}

Since we assume $f$ to be continuous, locally uniform convergence corresponds to $\Delta_\tau = \emptyset$ for all $\tau>0$ and implies essentially uniform convergence. Even without the assumption of continuity, e.u.\ convergence implies convergence pointwise everywhere on the complement of a countable set. 
With this definition, our results on convergence in three dimensions can be summarised as
\[
u_\eps\to u \text{ e.u.\ on }\Omega\setminus\spt(\mu)\qquad \text{and}\qquad \left(|u_\eps|-1\right)_+\to 0\text{ e.u.\ on } \Omega.
\]

\begin{remark} \label{remark ess unif}
Essentially uniform convergence is a powerful tool for our purposes, but still quite far from uniform convergence. The following properties are easy to establish.

\begin{enumerate}
\item Assume that $f_\eps\to f$ e.u.~on $U$. Then $\Delta= \bigcup_{\tau>0} \Delta\tau = \bigcup_{k=1}^\infty \Delta_{1/k}$ is countable and $f_\eps(x)\to f(x)$ for all $x\in U\setminus \Delta$.

\item Let $K\cc U\setminus \Delta$. Then $f_\eps\to f$ uniformly on $K$.

\item $\Delta$ is countable and may lie dense in $U$, in which case the previous point is vacuous. In particular, it may happen that $f_\eps\to f$ e.u.\ but there exists no open set $U' \subset U$ such that $f_\eps\to f$ uniformly on $U'$. We shall see in Example \ref{introductory example} that this may happen in our case of finite energy sequences $u_\eps$.
\end{enumerate}
\end{remark}
In one space dimension, the same kind of convergence was used by Dal Maso and Iurlano for phase-fields governed by a Modica-Mortola energy \cite[Proof of Proposition 1]{MR2997535}. In one dimension, the Modica-Mortola functional controls functions well enough to show essentially uniform convergence. In Remark \ref{remark generalise} we discuss under what assumptions our techniques can be adapted to prove essentially uniform convergence in higher dimensions.  

We begin by proving convergence from outside $[-1,1]$, also at $\spt(\mu)$. 

\begin{theorem}\label{theorem outside n=3}
Let $\tau>0$ and $x\in \Omega$ a point for which there exists a sequence $x_\eps\to x$ such that $\limsup_{\eps\to 0} |u_\eps(x_\eps)|\geq 1+\tau$. Then there exists $\bar\theta>0$ depending only on $\bar\alpha, \bar\mu$ and $\tau$ such that $\alpha(\{x\})\geq \bar\theta$. In particular, there are only finitely many such points.
\end{theorem}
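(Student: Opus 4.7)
The plan is to upgrade a value $|u_\eps(x_\eps)|\geq 1+\tau$ into a quantitative lower bound on $\alpha(\{x\})$ by combining the $\eps$-scale H\"older regularity from Lemma \ref{regularity lemma} with the off-well $L^2$-estimate for $W'(u_\eps)$ from Lemma \ref{RS_prop_bdry}.

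Working along a subsequence on which $|u_\eps(x_\eps)|\geq 1+\tau$, Lemma \ref{regularity lemma} yields $|u_\eps(y)-u_\eps(x_\eps)|\leq C\eps^{-1/2}|y-x_\eps|^{1/2}$ on $B_\eps(x_\eps)$ (with $C$ depending only on $\bar\mu,\bar\alpha$). Solving $C\eps^{-1/2}r^{1/2}\leq \tau/2$ produces a radius $r_\eps = c(\tau)\eps$ such that $|u_\eps|\geq 1+\tau/2$ on $B_{r_\eps}(x_\eps)$. Since $\kappa(\tau):=\inf_{|s|\geq 1+\tau/2}|W'(s)|=(1+\tau/2)(\tau+\tau^2/4)>0$ and $n=3$, integration over this ball gives
\[
\int_{B_{r_\eps}(x_\eps)} W'(u_\eps)^2 \dx \;\geq\; c_1(\tau)\,\eps^3, \qquad c_1(\tau):=\tfrac{4\pi}{3}\kappa(\tau)^2c(\tau)^3.
\]

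Next I localise this estimate against $\alpha$. Fix $\rho>0$ small enough that $B_{2\rho}(x)\cc\Omega$ and $\alpha(\partial B_{2\rho}(x))=0$; all but countably many radii satisfy the latter. For $\eps$ small enough, $B_{r_\eps}(x_\eps)\subset B_\rho(x)\cap\{|u_\eps|\geq 1\}$. Apply Lemma \ref{RS_prop_bdry} with ambient domain $B_{2\rho}(x)$, subdomain $B_\rho(x)$, parameter $r=\rho$, and $k=2$. Rewriting the first right-hand term via $\int_{B_{2\rho}(x)}v_\eps^2\dx = c_0\eps\,\alpha_\eps(B_{2\rho}(x))$, and bounding the second using Corollary \ref{lemma quadratic decay}, I obtain for all sufficiently small $\eps$
\[
c_1(\tau)\,\eps^3 \;\leq\; 2c_0C_2\,\eps^3\,\alpha_\eps(B_{2\rho}(x)) + C'\,\eps^7/\rho^4.
\]
Dividing by $\eps^3$ and sending $\eps\to 0$, the weak-$*$ convergence $\alpha_\eps\wto\alpha$ combined with $\alpha(\partial B_{2\rho}(x))=0$ gives $c_1(\tau)\leq 2c_0 C_2\,\alpha(B_{2\rho}(x))$.

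Sending $\rho\to 0$ along admissible radii then yields $\alpha(\{x\})\geq \bar\theta:= c_1(\tau)/(2c_0C_2)$, a positive constant depending only on $\tau,\bar\alpha,\bar\mu$. The finiteness of such points, with a quantitative bound of at most $\bar\alpha/\bar\theta$, follows immediately because $\alpha$ is a finite Radon measure of mass $\bar\alpha$. The main obstacle is the error bookkeeping in Lemma \ref{RS_prop_bdry}: one must choose $k\geq 1$ and exploit $\eps\ll\rho$ before passing to the limit so that the cross term involving $\int W'(u_\eps)^2$ is strictly $o(\eps^3)$ after dividing. This is the natural regime and involves no subtle estimate beyond what Corollary \ref{lemma quadratic decay} already supplies.
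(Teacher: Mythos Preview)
Your argument is correct and follows the same strategy as the paper's proof: use the $\eps$-scale H\"older estimate of Lemma~\ref{regularity lemma} to produce a ball $B_{c\eps}(x_\eps)$ on which $|u_\eps|\geq 1+\tau/2$, then feed the resulting lower bound on $\int_{\{|u_\eps|>1\}} W'(u_\eps)^2$ into the off-well estimate (Lemma~\ref{RS_prop_bdry}/Corollary~\ref{lemma quadratic decay}) localised to $B_\rho(x)\cc B_{2\rho}(x)$, and finally let $\rho\to 0$. Your write-up is in fact more explicit than the paper's about the bookkeeping (choosing $k=2$, tracking the $\eps^7/\rho^4$ error, selecting radii with $\alpha(\partial B_{2\rho}(x))=0$); the paper compresses all of this into the single phrase ``Using Corollary~\ref{lemma quadratic decay}''.
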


\begin{proof}
Passing to a subsequence and replacing $\tau$ by $\tau/2$, we may assume that $|u_\eps(x_\eps)|\geq 1+\tau$ for all $\eps$. Since $\Omega$ is open, there exists $r>0$ such that $B_{4r}(x) \subset\Omega$. Thus $B_{3r}(x)\subset\Omega_\eps$ for all sufficiently small $\eps$, so we may use Lemma \ref{regularity lemma} with uniform constants.
Since $x_\eps\to x$, for all sufficiently small $\eps>0$ we have $B_\eps(x_\eps)\subset B_r(x)$, and by H\"older-continuity of $u_\eps$, there is $0<c<1$ such that
\[
|u_\eps|\geq 1 + \frac{\tau}2 \quad\text{on }B_{c\eps}(x_\eps)
\]
which implies that
\[
\mu_\eps(B_r(x))\geq \mu_\eps(B_{c\eps}(x_\eps))\geq \omega_n\,(c\eps)^n\,\frac{W(1+\tau/2)}\eps.
\]
Using Corollary \ref{lemma quadratic decay}, we find that
$\alpha(B_{2r}) \geq \omega_n\,c^n\,W(1+\tau/2)$ where $c$ only depends on the H\"older constant of $u_\eps$ on $B_{c\eps}(x_\eps)$ and thus only on the energy bounds. Taking $r\to 0$, we see that
\[
\alpha(\{x\}) \geq C_{\bar\alpha,\bar \mu, \tau}.
\]
A point with the properties of $x$ is therefore an atom of $\alpha$ with a minimal size depending on $\bar\alpha, \bar\mu$ and $\tau$. In particular, since $\bar\alpha<\infty$, there are only finitely many such points.
\end{proof}

Note that we had to use the limiting measure $\alpha$. Its existence may always be achieved by taking a subsequence $\eps\to 0$. On the other hand, if we add bumps as Example \ref{introductory example} based at points along a dense sequence in some $\Omega'\cc\Omega\setminus\spt(\mu)$, we see that all points $x\in \Omega'$ are limits of bad sequences. Thus the existence of $\alpha$ is of critical importance for the argument above.

The following result is the three dimensional version of \cite[Lemma 3.9]{MR3590663}, obtained with a slightly improved estimate and streamlined argument. We slightly abuse notation and denote by $\W_\eps, S_\eps, \E_\eps$ also the functionals given by the same formulae as above on the function space $L^1(B_1(0))$ instead of $L^1(\Omega)$.

\begin{lemma}\label{minimisation lemma}
Let $n=2,3$, $B= B_1(0)\subset\R^n$, $\theta\in[0,1)$ and 

\[X_\theta\coloneqq  \{u\in W^{2,2}(B)\:|\: |u(0)|\leq \theta\}.\]
Then the function 

\[e\colon [0,1)\to \R, \quad e(\theta) \coloneqq  \liminf_{\eps\to 0} \inf_{u\in X_\theta}\E_\eps(u)\]
is strictly positive.
\end{lemma}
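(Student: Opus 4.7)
I argue by contradiction. Suppose $e(\theta) = 0$, so that there exist sequences $\eps_k \to 0$ and $u_k \in X_\theta$ with $\E_{\eps_k}(u_k) \to 0$. The plan is to produce a positive lower bound on the $(n-1)$-density of $\mu_{\eps_k}$ at the microscopic scale $\eps_k$, transport it to macroscopic scale via the monotonicity formula, and thereby contradict $\mu_{\eps_k},\alpha_{\eps_k} \to 0$ that is forced by $\E_{\eps_k}\to 0$.

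The first step uses Lemma \ref{regularity lemma}: $u_k$ is H\"older continuous with $|u_k(y)-u_k(z)| \le C|y-z|^{1/2}/\eps_k^{1/2}$ on $B_{\eps_k}(0)$, with $C$ depending only on the (bounded) energies. Choosing $c=c(\theta)>0$ with $C\sqrt{c}\le (1-\theta)/2$ yields $|u_k|\le (1+\theta)/2<1$ on $B_{c\eps_k}(0)$, so $W(u_k)\ge W((1+\theta)/2)=:c_\theta>0$ there. This gives the lower density
\[
(c\eps_k)^{1-n}\,\mu_{\eps_k}(B_{c\eps_k}(0)) \ \ge \ \frac{\omega_n\, c\, c_\theta}{c_0} \ =: \ D_\theta \ > \ 0.
\]

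Next I apply the estimated monotonicity formula (Lemma \ref{lemma estimated monotonicity formula}) with $r=c\eps_k$ and $R=1/2$ to obtain
\[
D_\theta \ \le \ 3\cdot 2^{n-1}\mu_{\eps_k}(B_{1/2}) + 3\,\alpha_{\eps_k}(B_{1/2}) + 2\int_{c\eps_k}^{1/2} \frac{\xi_{\eps_k,+}(B_\rho)}{\rho^n}\,\d\rho.
\]
Since $\mu_{\eps_k}(B_{1/2})\le S_{\eps_k}(u_k)\to 0$ and $\alpha_{\eps_k}(B_{1/2})\le \W_{\eps_k}(u_k)\to 0$, the first two terms vanish, so the discrepancy integral must exceed $D_\theta/4$ for large $k$. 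I then show the discrepancy integral actually tends to zero, a contradiction. Apply Lemma \ref{roger's lemma} pointwise in $\rho\in[c\eps_k,1/2]$ with a small parameter $\delta=\delta_k\to 0$, divide by $\rho$, and integrate. The $\eps\delta/\rho$-term contributes $\lesssim\delta_k/c$; the $\eps^2\delta^{-M}$-term, after invoking Corollary \ref{lemma quadratic decay} to bound the $W'$-contribution, contributes $\lesssim\delta_k^{-M}\E_{\eps_k}$; and the principal $\delta\,\rho^{1-n}\mu_\eps(B_{2\rho})$-term, after re-applying the monotonicity formula to bound the density pointwise in $\rho$ self-consistently, contributes $\lesssim \delta_k\log(1/\eps_k)$ times an asymptotically small factor. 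A choice such as $\delta_k=1/\log^2(1/\eps_k)$ together with passage to a subsequence along which $\delta_k^{-M}\E_{\eps_k}\to 0$ drives all three contributions to zero, yielding the desired contradiction.

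The main obstacle is the final balance in step 3. The $\mu_\eps$-term of Lemma \ref{roger's lemma} na\"ively grows like $O(\eps_k^{1-n})$ upon integration against $\rho^{-n}$, and must be tamed before it can be absorbed by $\delta_k$. The self-referential estimate via the monotonicity formula reduces this growth to a logarithmic one, and only then does the diagonal choice of $\delta_k$ against $\eps_k$ and $\E_{\eps_k}$ close the loop. This delicate coupling of the three scales---microscopic $\eps_k$, Röger parameter $\delta_k$, and vanishing total energy $\E_{\eps_k}$---is where the actual work of the proof concentrates.
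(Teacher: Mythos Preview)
Your proposal is correct and follows the same overall strategy as the paper: argue by contradiction, use the H\"older estimate of Lemma~\ref{regularity lemma} to produce a uniform positive lower bound on $(c\eps)^{1-n}\mu_\eps(B_{c\eps})$, and then combine the monotonicity formula (Lemma~\ref{lemma estimated monotonicity formula}) with the discrepancy estimate (Lemma~\ref{roger's lemma}) and Corollary~\ref{lemma quadratic decay} to contradict $\E_\eps\to 0$.

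The only substantive difference lies in how the small parameter $\delta$ in Lemma~\ref{roger's lemma} is handled. The paper chooses a $\rho$-\emph{dependent} parameter $\delta = \eta_\eps(\eps/\rho)^\beta$ with $0<\beta<1/M$; this makes the kernel $\delta/\rho \sim \eta_\eps\eps^\beta\rho^{-1-\beta}$ integrable over $[\eps,R]$ with a uniformly bounded integral, and the argument closes via Gr\"onwall's inequality applied directly to the density function $f_\eps(\rho)=\rho^{1-n}\mu_\eps(B_\rho)$. You instead keep $\delta=\delta_k$ constant in $\rho$, accept the logarithmic loss $\int_{c\eps_k}^{1/2}\rho^{-1}\,\d\rho\sim\log(1/\eps_k)$, and compensate by taking $\delta_k\to 0$ faster than $1/\log(1/\eps_k)$; feeding the monotonicity formula back into the density bound then lets you absorb the discrepancy integral into itself. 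Both routes are valid. Your version is arguably more elementary (no $\rho$-dependence to track, and the closure is a one-line absorption rather than Gr\"onwall), while the paper's choice avoids the logarithm entirely and makes the competing requirements on $\delta$ more transparent. One small point: your specific choice $\delta_k=1/\log^2(1/\eps_k)$ does not automatically guarantee $\delta_k^{-M}\E_{\eps_k}\to 0$, so rather than ``passing to a subsequence'' it is cleaner (and closer to the paper's $\eta_\eps$) to say ``choose $\delta_k\to 0$ slowly enough that $\delta_k\log(1/\eps_k)$, $\delta_k^{-M}\eps_k$ and $\delta_k^{-M}\E_{\eps_k}$ all tend to zero'', which is always possible since $\eps_k,\E_{\eps_k}\to 0$.
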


\begin{proof}
For a contradiction, assume that there is $\theta\in[0,1)$ and a sequence $u_\eps\in X_\theta$ such that $\E_\eps(u_\eps)\to 0$. As usual, denote $B_\rho\coloneqq  B_\rho(0)$ and the diffuse mass and Willmore measures by $\mu_\eps$ and $\alpha_\eps$, respectively, despite the change of domain. Consider the densities
\[
f_\eps(\rho)\coloneqq  \rho^{1-n}\mu_\eps(B_\rho)
\]
for $\rho\in[\eps, 1]$. By the H\"older continuity on $B_{1/2}$ from Lemma \ref{regularity lemma}, we get $f_\eps(\eps) = \eps^{1-n}\mu_\eps(B_\eps)\geq \bar c>0$ for a uniform constant depending only on $\theta$ by the same argument as in Theorem \ref{theorem outside n=3} (since $\bar\mu = \bar\alpha=0$ by assumption). In the next step, we will apply Lemma \ref{roger's lemma} with $\delta = \eta_\eps\,(\eps/\rho)^\beta$ for some $0<\beta < 1/M$ and $\eta_\eps\to 0$ so slowly that 

\begin{enumerate}
\item $\eta_\eps^{-M}\alpha_\eps(B)\to 0$ and

\item $\eta_\eps^{-M}\eps^{1-M\beta}\leq 1$.
\end{enumerate}

Note that the second condition also implies that $\delta^{-M}\eps = (\eps/\rho)^{-M\beta}\,\eta_\eps^{-M}\eps\leq 1$ for $\rho\geq\eps$. In particular, $\delta<\delta_0$ independently of $\rho\geq\eps$ for all small enough $\eps>0$. Using the estimated monotonicity formula from Lemma \ref{lemma estimated monotonicity formula} for $\eps = r< R = 1/3$ together with the estimates for 

\begin{itemize}
\item[--] $\xi_{\eps,+}$ from Lemma \ref{roger's lemma} for the $\delta$ given above, for
\item[--] $||u_\eps||_{\infty, B_{2/3}}$ from Lemma \ref{regularity lemma} and for 
\item[--] $\int_{B_{2/3}\cap\{|u_\eps|>1\}}\frac1{\eps^3}W'(u_\eps)^2\dx$ as in Corollary \ref{lemma quadratic decay},
\end{itemize}

we obtain
\begin{align*}
f_\eps(\eps) &\leq 3\,R^{1-n}\,\mu_\eps(B_R) + 3\,\alpha_\eps(B_R) + 2\int_r^R\frac{\xi_{\eps,+}(B_\rho)}{\rho^n}\d\rho\\
	&\leq 3\,R^{1-n}\,\mu_\eps(B_R) + 3\,\alpha_\eps(B_R) + 2\,C\int_r^R \eta_\eps\,\frac{\eps^\beta}{\rho^{1+\beta}}\,\rho^{1-n}\,\mu_\eps(B_{2\rho})\d\rho\\
		&\qquad + \int_r^R\frac{\eps^{2-M\beta}}{\rho^{n-M\beta}}\,\eta_\eps^{-M} \left(\alpha_\eps(B_{2\rho}) + \int_{B_{2\rho}\cap\{|u_\eps|>1\}}\frac1{\eps^3}W'(u_\eps)^2\dx\right) + \frac{\eps^{1+\beta}\,\eta_\eps}{\rho^{2+\beta}}\d\rho\\
	&\leq 3\,R^{1-n}\,\mu_\eps(B_R) + 3\,\alpha_\eps(B_R) + \int_{r}^R\frac{2\,C\,\eta_\eps\,\eps^\beta}{\rho^{1+\beta}}\,f_\eps(2\rho)\d\rho + \frac{C}{1+\beta}\,\eps^{1+\beta}\,\left[r^{-(1+\beta)}- R^{-(1+\beta)}\right]\,\eta_\eps \\
		&\qquad + \frac{C}{n-1-M\beta}\,\eps^{2-M\beta}\,\left\{r^{1-n+M\beta} - R^{1-n+M\beta} \right\}\,\eta_\eps^{-M}\cdot \left\{\alpha_\eps(B_{2R}) + \frac{1 - \left(\frac{5\eps}{R}\right)^3}{1 - \frac{5\,\eps}{R}}\,\alpha_\eps(B_{2R}) + \right.\\
		&\hspace{9.2cm}\left. \frac1{\eps^2}\,\left(\frac{5\,\eps}R\right)^3 \,||u_\eps||_{\infty, B_{2R}}\cdot 4\,\mu_\eps(B_{2R})\right\}\\
	&\leq \gamma_\eps + \int_{r}^{2R}\frac{2\,C\,\eta_\eps\,\eps^\beta}{\rho^{1+\beta}}\,f_\eps(\rho)\d\rho
\end{align*}
with $\gamma_\eps\to 0$ as $\eps\to 0$. We may now use Gr\"onwall's inequality backwards in time to deduce that
\begin{align*}
f_\eps(\eps) &\leq \gamma_\eps\,\exp\left(\int_\eps^{2/3} \frac{C\,\eta_\eps\,\eps^\beta}{\rho^{1+\beta}}\d\rho \right) \leq C\,\gamma_\eps.
\end{align*}
This is a contradiction since $\gamma_\eps\to 0$, but on the other hand $f_\eps(\eps)\geq \bar c>0$ due to H\"older continuity. 
\end{proof}

In the next step of our program, we will reduce the problem of uniform convergence to this minimisation problem. The central tool in doing so is the following rescaling result, compare e.g.\ the proof of \cite[Theorem 5.1]{roger:2006ta}.

\begin{lemma}\label{rescaling lemma}
Let $u_\eps\colon B(x,r)\to \R$, $\lambda>0$ and $\hat u_\eps\colon B(0,r/\lambda)\to \R$ with

\[\hat u_\eps(y) = u_\eps(x+\lambda y).\]
Set $\hat r\coloneqq  r/\lambda$, $\hat \eps\coloneqq  \eps/\lambda$, 

\[\hat\mu_\eps\coloneqq  \frac1{c_0}\: \left(\frac{\hat \eps}2\,|\nabla \hat u_\eps|^2 + \frac1{\hat\eps}\,W(\hat u_\eps)\right)\L^n,\qquad \hat \alpha_\eps\coloneqq  \frac1{c_0\,\hat\eps}\,\left(\hat \eps\,\Delta \hat u_\eps - \frac1{\hat\eps}\,W'(\hat u_\eps)\right)^2\,\L^n.\]
Then 

\[\hat r^{1-n}\hat\mu_\eps(B(0,\hat r)) = r^{1-n}\,\mu_\eps(B(x,r)),\qquad \hat r^{3-n}\, \hat\alpha_\eps(B(0,\hat r)) = r^{3-n}\,\alpha_\eps(B(x,r)).\]
\end{lemma}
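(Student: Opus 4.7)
The plan is to verify both identities by a direct chain-rule computation combined with the change-of-variables formula for Lebesgue measure. This is purely a bookkeeping statement about how the diffuse mass and Willmore integrands transform under the rescaling $z = x + \lambda y$, with the right powers of $\lambda$ canceling against the powers coming from $\hat r = r/\lambda$.

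First I would compute the scaling of the derivatives: from $\hat u_\eps(y) = u_\eps(x + \lambda y)$ one has $\nabla \hat u_\eps(y) = \lambda\,\nabla u_\eps(x+\lambda y)$ and $\Delta \hat u_\eps(y) = \lambda^2\,\Delta u_\eps(x+\lambda y)$. Together with $\hat\eps = \eps/\lambda$, a term-by-term check gives
\[
\frac{\hat\eps}{2}|\nabla \hat u_\eps(y)|^2 + \frac{1}{\hat\eps}W(\hat u_\eps(y)) \;=\; \lambda\left(\frac{\eps}{2}|\nabla u_\eps|^2 + \frac{1}{\eps}W(u_\eps)\right)_{x+\lambda y},
\]
and similarly the Willmore integrand inside the square transforms as
\[
\hat\eps\,\Delta \hat u_\eps(y) - \frac{1}{\hat\eps}W'(\hat u_\eps(y)) \;=\; \lambda\left(\eps\,\Delta u_\eps - \frac{1}{\eps}W'(u_\eps)\right)_{x+\lambda y},
\]
so after squaring and dividing by $\hat\eps = \eps/\lambda$ the Willmore integrand picks up an overall factor $\lambda^3$.

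Next I would apply the change of variables $z = x + \lambda y$, which maps $B(0,\hat r)$ onto $B(x,r)$ and contributes a Jacobian factor $\lambda^{-n}$. Combining the two scalings gives
\[
\hat\mu_\eps(B(0,\hat r)) = \lambda^{1-n}\,\mu_\eps(B(x,r)),\qquad \hat\alpha_\eps(B(0,\hat r)) = \lambda^{3-n}\,\alpha_\eps(B(x,r)).
\]
Multiplying the first equation by $\hat r^{1-n} = \lambda^{n-1} r^{1-n}$ and the second by $\hat r^{3-n} = \lambda^{n-3} r^{3-n}$ cancels the $\lambda$-factors exactly, yielding the two claimed identities.

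There is no real analytic obstacle here; the entire content is tracking the powers of $\lambda$. The only point requiring any care is making sure all four sources of $\lambda$ — the derivatives of $\hat u_\eps$, the substitution $\hat\eps = \eps/\lambda$, the squaring in the Willmore integrand, and the Jacobian $\lambda^{-n}$ — are combined consistently so as to give precisely the scale-invariant densities $r^{1-n}\mu_\eps(B_r)$ and $r^{3-n}\alpha_\eps(B_r)$ on which the monotonicity arguments of the previous lemmas rely.
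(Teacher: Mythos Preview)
Your proof is correct; the paper itself does not spell out a proof of this lemma, merely citing the analogous computation in \cite[Theorem 5.1]{roger:2006ta}, and your direct chain-rule plus change-of-variables verification is exactly the intended argument.
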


With this in mind, we proceed to our main result on convergence away from $\spt(\mu)$ in three dimensions. The proof resembles that of \cite[Theorem 2.1]{MR3590663}, where uniform convergence in two dimensions was established. 

\begin{theorem}\label{theorem away n=3}
Let $\tau>0$ and $x\in\Omega\setminus \spt(\mu)$ such that there exists a sequence $x_\eps\to x$ with the property that
\[
\limsup_{\eps\to 0}|u_\eps(x_\eps) - u(x) | \geq \tau.
\]
Then there exists $\bar\theta$ depending only on $\tau$ such that $\alpha(\{x\})\geq \bar\theta$. In particular, there are only finitely many such points.
\end{theorem}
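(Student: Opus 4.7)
The plan is to rescale around a suitably chosen centre and invoke the minimisation lemma (Lemma \ref{minimisation lemma}). Normalise so that $u(x)=+1$; after passing to a subsequence, $u_\eps(x_\eps)\le 1-\tau$. If $u_\eps(x_\eps)\le -1-\tau/2$ infinitely often then $|u_\eps(x_\eps)|\ge 1+\tau/2$ and Theorem \ref{theorem outside n=3} yields the claim directly, so we assume $u_\eps(x_\eps)\in(-1-\tau/2,\,1-\tau]$. We now select a centre $z_\eps\to x$ with $|u_\eps(z_\eps)|\le\theta$ for some $\theta=\theta(\tau)<1$. If $u_\eps(x_\eps)\in[-1/2,\,1-\tau]$ we take $z_\eps=x_\eps$ and $\theta=\max(1/2,\,1-\tau)$. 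Otherwise $u_\eps(x_\eps)<-1/2$; using that $u\equiv 1$ on some ball $B_{R_0}(x)\subset\Omega\setminus\spt(\mu)$ and hence $u_\eps\to 1$ in $L^1(B_{R_0}(x))$, we find $x'_\eps\in B_{R_0}(x)$ with $u_\eps(x'_\eps)>1/2$ and apply the intermediate value theorem on the segment $[x_\eps,x'_\eps]$ to obtain a point where $u_\eps=0$. Taking $z_\eps$ to be the closest such point to $x_\eps$, the open ball $B_{|z_\eps-x_\eps|}(x_\eps)$ is connected and $u_\eps$ is nonzero there, hence $u_\eps<0$ on it; combined with $|\{u_\eps<1/2\}\cap B_{R_0}(x)|\to 0$ this forces $|z_\eps-x_\eps|\to 0$, so $z_\eps\to x$ and we set $\theta=0$.

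Now define $\lambda_\eps\coloneqq \sqrt\eps$, $\hat\eps\coloneqq \eps/\lambda_\eps\to 0$, and $\hat u_\eps(y)\coloneqq u_\eps(z_\eps+\lambda_\eps y)$ on $B_1(0)$. By Lemma \ref{rescaling lemma} (with $n=3$),
\[
\hat S_{\hat\eps}(\hat u_\eps)=\lambda_\eps^{-2}\,\mu_\eps(B_{\lambda_\eps}(z_\eps)),\qquad \hat\W_{\hat\eps}(\hat u_\eps)=\alpha_\eps(B_{\lambda_\eps}(z_\eps)),
\]
and $|\hat u_\eps(0)|\le\theta<1$, so Lemma \ref{minimisation lemma} provides $\liminf_\eps\hat\E_{\hat\eps}(\hat u_\eps)\ge e(\theta)>0$. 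Suppose for contradiction that $\alpha(\{x\})<\bar\theta$ for a small $\bar\theta$ to be fixed later, and choose $R>0$ so small that $B_{2R}(x)\subset\Omega\setminus\spt(\mu)$ and $\alpha(\overline{B_{2R}(x)})<2\bar\theta$. Since $z_\eps\to x\notin\spt(\mu)$, the Portmanteau inequality gives $\mu_\eps(B_R(z_\eps))\to 0$ and $\limsup_\eps\alpha_\eps(B_R(z_\eps))\le 2\bar\theta$. Using $B_{\lambda_\eps}(z_\eps)\subset B_R(z_\eps)$, we already obtain $\hat\W_{\hat\eps}(\hat u_\eps)\le 2\bar\theta+o(1)$.

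The main obstacle is the matching upper bound on $\hat S_{\hat\eps}(\hat u_\eps)$. The estimated monotonicity formula (Lemma \ref{lemma estimated monotonicity formula}) at $z_\eps$ with inner radius $\lambda_\eps$ and outer radius $R$ gives
\[
\lambda_\eps^{-2}\mu_\eps(B_{\lambda_\eps}(z_\eps))\le 3R^{-2}\mu_\eps(B_R(z_\eps))+3\alpha_\eps(B_R(z_\eps))+2\int_{\lambda_\eps}^R\rho^{-3}\,\xi_{\eps,+}(B_\rho(z_\eps))\,\d\rho,
\]
and the first two terms are bounded by $6\bar\theta+o(1)$. For the discrepancy integral we reproduce the scheme of the proof of Lemma \ref{minimisation lemma}: apply Lemma \ref{roger's lemma} with $\delta=\eta_\eps(\eps/\rho)^\beta$ for some $0<\beta<1/M$ and $\eta_\eps\to 0$ slowly enough that $\eta_\eps^{-M}\alpha_\eps(\Omega)\to 0$ and $\eta_\eps^{-M}\eps^{1-M\beta}\le 1$, control the over-unity potential term through Corollary \ref{lemma quadratic decay} and the $L^\infty$-bound of Lemma \ref{regularity lemma}, and absorb the emerging Modica--Mortola contribution via a backwards Gr\"onwall inequality applied to the density $\rho\mapsto\rho^{-2}\mu_\eps(B_\rho(z_\eps))$. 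The result is $\limsup_\eps\hat S_{\hat\eps}(\hat u_\eps)\le C\bar\theta$ for some $C=C(\bar\mu,\bar\alpha)$, and consequently $\limsup_\eps\hat\E_{\hat\eps}(\hat u_\eps)\le(C+2)\bar\theta$. Choosing $\bar\theta<e(\theta)/(C+2)$ contradicts the lower bound from Lemma \ref{minimisation lemma}, so $\alpha(\{x\})\ge e(\theta)/(C+2)>0$, which is a quantity depending only on $\tau$ (and the background energy bounds), as claimed.
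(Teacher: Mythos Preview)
Your reproduction of the Gr\"onwall scheme contains a genuine error: the condition ``$\eta_\eps\to 0$ slowly enough that $\eta_\eps^{-M}\alpha_\eps(\Omega)\to 0$'' is impossible here, since $\alpha_\eps(\Omega)$ is bounded but does not tend to zero. In the proof of Lemma~\ref{minimisation lemma} that condition was achievable precisely because the contradiction hypothesis there was $\E_\eps(u_\eps)\to 0$; you have copied the parameters without checking they remain consistent in your setting. The scheme can be repaired by taking $\eta$ to be a fixed small constant rather than $\eta_\eps\to 0$: with lower integration limit $\sqrt\eps$ instead of $\eps$, the $\alpha_\eps$-error term scales like $\eta^{-M}\eps^{1-M\beta/2}\to 0$ and the Gr\"onwall factor like $\exp(C\eta\,\eps^{\beta/2})\to 1$, so the argument closes. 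But this is not what you wrote, and the resulting $\bar\theta$ then depends on $\bar\mu,\bar\alpha$ as well as $\tau$, slightly weaker than the statement claims.

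More to the point, the whole $\sqrt\eps$-rescaling and second Gr\"onwall pass is unnecessary. The paper rescales by a \emph{fixed} factor $\lambda=r$ with $B_{3r}(x)\subset\Omega\setminus\spt(\mu)$: setting $\hat u_\eps(y)=u_\eps(x_\eps+ry)$ and $\hat\eps=\eps/r\to 0$, one has $\hat u_\eps\in X_{1-\tau}$ and applies Lemma~\ref{minimisation lemma} as a black box to obtain $\hat\alpha_\eps(B_1)+\hat\mu_\eps(B_1)\geq e(1-\tau)$. By Lemma~\ref{rescaling lemma} (with $n=3$) this reads $\alpha_\eps(B_r(x_\eps))+r^{-2}\mu_\eps(B_r(x_\eps))\geq e(1-\tau)$. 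Since $\overline{B_{2r}(x)}\cap\spt(\mu)=\emptyset$, one has $r^{-2}\mu_\eps(B_r(x_\eps))\leq r^{-2}\mu_\eps(\overline{B_{2r}(x)})\to 0$ for fixed $r$, whence $\alpha(B_{3r}(x))\geq e(1-\tau)$; then let $r\to 0$. The insight you missed is that a fixed-radius rescaling already drives $\hat\eps\to 0$, and the vanishing of $\mu_\eps$ on compact subsets of $\Omega\setminus\spt(\mu)$ absorbs the area contribution for free --- there is no need to control $\lambda_\eps^{-2}\mu_\eps(B_{\lambda_\eps})$ via the monotonicity formula at all.
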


\begin{proof}
Since convergence from outside $[-1,1]$ is already clear, we only consider $x$ that admit a sequence $x_\eps\to x$ such that
\[
\liminf_{\eps\to 0}|u_\eps(x_\eps)|\leq 1-\tau.
\]
Without loss of generality, we may assume that $u(x) =1$. Assume that there is a subsequence $x_\eps\to x$ such that $u_\eps(x_\eps)<0$. Since $u_\eps\to u$ in $L^1(\Omega)$ (so pointwise almost everywhere, up to a subsequence), and $u$ is locally constant, there is also a sequence $\tilde x_\eps\to x$ such that $u_\eps(\tilde x_\eps)\geq 1-\tau/2$. Using the continuity of $u_\eps$, we obtain a sequence $x_\eps'\to x$ such that $|u_\eps(x_\eps')|\leq 1-\tau$. Passing to a subsequence in $\eps$, we may assume that this holds for all $\eps$.

So assume that $x_\eps\to x\in\Omega$ and $|u_\eps(x_\eps)|\leq 1-\tau$. Since $\Omega\setminus\spt(\mu)$ is open, there is $r>0$ such that $B(x,3r)\subset\Omega\setminus\spt(\mu)$. As $x_\eps\to x$, $B(x_\eps,r)\subset B(x,2r)$ for almost all $\eps>0$. We have $\mu(B(x,3r))=0$, so (using the terminology of Lemmas \ref{minimisation lemma} and \ref{rescaling lemma})
\begin{align*}
\alpha(B_{3r}(x)) &\geq \alpha(\overline{B_{2r}(x)})\\
	&\geq \limsup_{\eps\to 0} \left(\alpha_\eps(B_{2r}(x) + r^{1-n}\mu_\eps(B_{2r}(x))\right)\\
	&\geq \limsup_{\eps\to0} \left(\alpha_\eps(B_r(x_\eps)) + r^{1-n}\mu_\eps(B_r(x_\eps))\right)\\
	&= \limsup_{\hat\eps \to 0} \left(\hat \alpha_\eps(B_1(0)) + \hat \mu_\eps(B_1(0))\right)\\
	&\geq \limsup_{\hat\eps\to 0}\inf_{u\in X_{1-\tau}} \left(\W_{\hat\eps} + S_{\hat\eps}\right)(u)\\
	&\geq\bar\theta
\end{align*}
with $\hat u_\eps(y) = u_\eps(x_\eps+ry)$ and $\hat\eps = \eps/r$. Letting $r\to 0$, we establish that
\[
\alpha(\{x\})\geq \bar\theta
\]
where $\bar\theta$ only depends on $\tau$. Again, $x$ is an atom of a fixed minimal size, so there are only finitely many such points.
\end{proof}

\begin{corollary}\label{corollary recovery sequence}
Assume that $\Omega'\cc\Omega\setminus \spt(\mu)$. Then the following hold true.

\begin{enumerate}
\item For all $\tau>0$ there exists $\bar c_\tau>0$ such that if $\alpha$ has no atoms of size at least $\bar c$ in $\overline{\Omega'}$, then $\big|\,|u_\eps|-1\big|< \tau$ on $\Omega'$. 

\item If $\alpha$ has no atoms in $\overline{\Omega'}$ at all, then $u_\eps\to u$ uniformly on $\Omega'$.

\item If $\mu$ is the mass measure of a varifold $V$ and $\alpha(\overline\Omega) = \W(V)$ (i.e.\ $u_\eps$ is a recovery sequence for its limit), then $u_\eps\to u$ locally uniformly in $\Omega\setminus\spt(\mu)$.
\end{enumerate}
\end{corollary}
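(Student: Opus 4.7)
The plan is to reduce all three parts of the corollary to the atomic-concentration Theorems \ref{theorem outside n=3} and \ref{theorem away n=3} via a compactness argument on $\overline{\Omega'}$.

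For part (1), I would choose $\bar c_\tau$ to be the smaller of the two minimal atom sizes $\bar\theta$ furnished by the two theorems for the given $\tau$. Arguing by contradiction, if the claim fails one obtains $\eps_k\to 0$ and $x_k\in\Omega'$ with $\big|\,|u_{\eps_k}(x_k)|-1\big|\geq\tau$. Passing to a subsequence, $x_k\to\bar x\in\overline{\Omega'}$, and along a further subsequence one of two cases persists: either $|u_{\eps_k}(x_k)|\geq 1+\tau$, in which case Theorem \ref{theorem outside n=3} produces $\alpha(\{\bar x\})\geq \bar c_\tau$; or $|u_{\eps_k}(x_k)|\leq 1-\tau$, in which case $u(\bar x)\in\{\pm1\}$ (because $\bar x\in\overline{\Omega'}\subset\Omega\setminus\spt(\mu)$) forces $|u_{\eps_k}(x_k)-u(\bar x)|\geq\tau$, and Theorem \ref{theorem away n=3} again gives $\alpha(\{\bar x\})\geq \bar c_\tau$. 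Both alternatives contradict the standing hypothesis on $\alpha$.

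Part (2) follows by the same compactness template applied to Theorem \ref{theorem away n=3} alone: any failure of uniform convergence would, for some $\tau>0$, yield $x_k\to \bar x\in\overline{\Omega'}$ with $|u_{\eps_k}(x_k)-u(\bar x)|\geq\tau$, producing an atom of $\alpha$ at $\bar x\in\overline{\Omega'}$ that is ruled out by hypothesis. Since $\Omega'\cc\Omega\setminus\spt(\mu)$ is arbitrary, this yields local uniform convergence on $\Omega\setminus\spt(\mu)$.

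For part (3), I would verify that the recovery-sequence hypothesis automatically produces no atoms of $\alpha$ outside $\spt(\mu)$, so that (2) applies. The R\"oger--Sch\"atzle bound $H^2_\mu\,\mu|_\Omega\leq\alpha|_\Omega$ recalled in the introduction combines with $\alpha(\overline\Omega)=\W(V)$ to yield
\[
\W(V)\: =\: \int_\Omega H^2\d\mu\: \leq\: \alpha(\Omega)\: \leq\: \alpha(\overline\Omega)\: =\: \W(V),
\]
so the non-negative Radon measure $\alpha|_\Omega-H^2_\mu\,\mu|_\Omega$ has zero total mass and hence vanishes. Consequently $\alpha|_\Omega$ is concentrated on $\spt(\mu)\cap\Omega$, so $\alpha(\overline{\Omega'})=0$ for every $\Omega'\cc\Omega\setminus\spt(\mu)$, and part (2) finishes the argument. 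The only step requiring real care is this measure equality in (3): one has to extract equality of measures (not merely of scalar masses) from the chain of inequalities above, which is exactly where the R\"oger--Sch\"atzle lower bound is used as a sharp tool rather than just a bound.
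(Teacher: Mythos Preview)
Your proposal is correct. Parts (1) and (2) are exactly the compactness argument the paper dismisses as ``obvious'' once Theorems \ref{theorem outside n=3} and \ref{theorem away n=3} are in hand, so there is nothing to compare there.

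For part (3) your route differs slightly from the paper's. The paper argues by contradiction: if $\alpha$ had an atom at some $x_0\notin\spt(\mu)$, then restricting the phase fields to the smaller domain $\Omega\setminus\overline{B_r(x_0)}$ would keep $\mu$ unchanged while strictly decreasing the limiting Willmore energy, violating the R\"oger--Sch\"atzle $\Gamma$-$\liminf$ inequality on that subdomain. You instead invoke the measure inequality $H^2_\mu\,\mu|_\Omega\leq\alpha|_\Omega$ from the introduction together with the equality of total masses $\int_\Omega H^2\d\mu=\alpha(\overline\Omega)$ to force $\alpha|_\Omega=H^2_\mu\,\mu|_\Omega$ as measures, hence $\spt(\alpha|_\Omega)\subset\spt(\mu)$. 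The two arguments are really two faces of the same R\"oger--Sch\"atzle lower bound; yours is arguably cleaner because it yields the full identification $\alpha|_\Omega=H^2_\mu\,\mu|_\Omega$ in one stroke rather than ruling out atoms one at a time, while the paper's version has the mild advantage of not needing to track whether $\alpha(\partial\Omega)=0$ in the chain $\alpha(\Omega)\leq\alpha(\overline\Omega)$.
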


\begin{proof}
All but the last point are obvious. Clearly, it suffices to show that $\alpha$ has no atoms outside $\spt(\mu)$. For a contradiction, assume that $x_0\notin \spt(\mu)$ is an atom of $\alpha$ and choose $\Omega' = \Omega\setminus \overline{B_r(x_0)}$ such that $B_r(x_0)\cc\Omega\setminus\spt(\mu)$. Then consider the sequence $\bar u_\eps = u_\eps$ pointwise. Clearly still $\bar\mu_\eps\wto\mu$, but $\liminf_{\eps\to 0}\W_\eps(\bar u_\eps) < \W(V)$ contradicting the $\Gamma-\liminf$ inequality from \cite{roger:2006ta}.
\end{proof}

The following is an easy corollary once essentially uniform convergence is established. We state it here in order to illustrate the properties of this mode of convergence.
\begin{corollary} \label{corollary Hs}
There exists a countable set $\Delta\subset \Omega\setminus\spt(\mu)$, such that $u_\eps\to u$ pointwise everywhere on $\Omega\setminus(\spt(\mu)\cup \Delta)$. In particular, for $C \cc\Omega\setminus\spt(\mu)$, $s>0$ such that $\H^s(C) < \infty$ we have that $u_\eps\to u$ $\H^s|_C$-almost everywhere. 
\end{corollary}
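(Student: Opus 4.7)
The plan is to simply harvest the countable set from Theorem \ref{theorem away n=3} and use a diagonal observation. Concretely, for each $k\in\N$ the set
\[
\Delta_{1/k}\coloneqq \{x\in\Omega\setminus\spt(\mu)\:|\:\exists\ x_\eps\to x\text{ with }\limsup_{\eps\to 0}|u_\eps(x_\eps)-u(x)|\geq 1/k\}
\]
is finite by Theorem \ref{theorem away n=3}, so $\Delta\coloneqq \bigcup_{k=1}^\infty \Delta_{1/k}$ is countable.

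Next, I would verify pointwise convergence on $\Omega\setminus(\spt(\mu)\cup\Delta)$. Fix $x$ in this set. For any $\tau>0$ choose $k$ with $1/k<\tau$; since $x\notin\Delta_{1/k}$, applying the definition of $\Delta_{1/k}$ to the constant sequence $x_\eps\equiv x$ yields $\limsup_{\eps\to 0}|u_\eps(x)-u(x)|<1/k<\tau$. As $\tau>0$ was arbitrary, $u_\eps(x)\to u(x)$.

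Finally, for the $\H^s$ statement with $s>0$: any countable set has vanishing $s$-dimensional Hausdorff measure, so $\H^s(\Delta\cap C)=0$ for every $C\cc\Omega\setminus\spt(\mu)$, and pointwise convergence on $C\setminus\Delta$ translates to $\H^s|_C$-a.e.\ convergence on $C$.

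There is no real obstacle here; the essential work is entirely contained in Theorem \ref{theorem away n=3}, and this corollary is just a packaging of its conclusion via the inclusion $\Delta=\bigcup_k\Delta_{1/k}$ together with the trivial bound $\H^s(\text{countable})=0$ for $s>0$.
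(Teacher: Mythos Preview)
Your proof is correct and follows essentially the same approach as the paper, which simply invokes Remark~\ref{remark ess unif}(1): the sets $\Delta_{1/k}$ are finite by Theorem~\ref{theorem away n=3}, so $\Delta=\bigcup_k\Delta_{1/k}$ is countable, and pointwise convergence off $\Delta$ together with $\H^s(\text{countable})=0$ for $s>0$ gives the result. Your version is a bit more explicit (spelling out the constant-sequence argument), but the content is identical.
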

\begin{proof}
The statement follows from Remark~\ref{remark ess unif} point (1), which is evident from the definition of essentially uniform convergence.
\end{proof}

A few remarks are in order.

\begin{remark}\label{remark n=2,3}
The only difference to the case $n=2$ lies in the different rescaling properties of $\alpha_\eps$ in two and three dimensions. There, we could deduce that $\alpha(B_{3r}(x)) \geq \bar\theta/r$, which gives a contradiction as $r\to 0$ and establishes uniform convergence of $|u_\eps|\to 1$ on sets $\Omega'\cc\Omega\setminus\spt(\mu)$. 
\end{remark}

\begin{remark}\label{remark recovery sequence}
As pointed out, if $\Omega'\cc \Omega$ and $\mu(\overline{\Omega'}) = \alpha(\overline{\Omega'}) = 0$, then $|u_\eps|\to 1$ uniformly on every $\Omega''\cc\Omega'$. However, the convergence has no {\it a priori} rate in $\eps$ in $n=3$ dimensions. Functions like
\[
u_\eps = 1 + f(\eps)\,g(\,(x-x_0)/\eps)
\]
will not lead to atoms of $\alpha$ if $g\in C_c^\infty(\R^n)$ and $f(\eps)\to 0$ as $\eps\to 0$. For similar considerations, see Example \ref{introductory example}. In two dimensions, the optimal rate of convergence is $\sqrt{\eps}$ as shown in \cite{nagase2007singular} from outside $[-1,1]$. The argument can also be applied in general.
\end{remark}
  
\begin{remark}\label{remark generalise}
The argument presented above can clearly be adapted to other situations with the following ingredients: 

\begin{enumerate}
\item a sequence of functions $u_\eps$  which induces two sequences of Radon measures $\mu_\eps,\alpha_\eps$ uniformly bounded on compact subsets

\item Limiting objects such that $u_\eps\to u$, $\mu_\eps\wto \mu$ and $\alpha_\eps\wto\alpha$,

\item an infinitesimal generation of mass property like 
\[|u_\eps(x) - u(x)|\geq \theta \qquad\Ra\qquad \eps^{1-n}\mu_\eps(B_\eps)\geq \bar c_\theta,\]

\item a monotonicity formula resembling
\[
R^{1-n}\mu_\eps(B_R) \geq c_1\,r^{1-n}\mu_\eps(B_r) - c_2\,\alpha_\eps(B_R) + \Xi_\eps, \qquad c_1,c_2>0, \qquad \eps\leq r\leq R
\]
for $\mu_\eps$ which involves only $\mu_\eps, \alpha_\eps$ and an error term $\Xi_\eps$ which goes to zero and

\item a critical or sub-critical rescaling property for $\alpha_\eps$ and an invariance property for the `density' $r^{1-n}\mu_\eps(B_r)$.
\end{enumerate}

Then we can re-write the problem of uniform convergence into a minimisation problem and employ the same arguments as above. Depending on the nature of the rescaling property, we may be able to obtain uniform convergence this way (as for $n=2$) or essentially uniform convergence (as for $n=3$).
\end{remark}

\subsection{Hausdorff Convergence} \label{section Hausdorff}

In applications, we like to think of $\spt(\mu)$ as being approximated by the set $\{u_\eps=0\}$. This is rigorously justified in the next theorem.

\begin{theorem}\label{corollary hausdorff}
Let $I\cc(-1,1)$ be non-empty, not necessarily open. Then, up to a subsequence, $u_\eps^{-1}(I)$ converges to a compact set $K\subset\overline\Omega$ in Hausdorff distance such that

\begin{enumerate}
\item $K\cap \Omega = \spt(\mu)\cap\Omega$ if $n=2$ or $n=3$ and $\alpha$ has no atoms in $\Omega\setminus\spt(\mu)$,

\item $K\cap\Omega = \left(\spt(\mu) \cap \Omega\right)\cup \bigcup_{k=1}^N \{x_k\}$ for finitely many points $x_k\in \Omega$ if $n=3$. The number $N$ of points can be bounded in terms of $I$ and $\limsup_{\eps\to 0}\E_\eps(u_\eps)$.
\end{enumerate}
\end{theorem}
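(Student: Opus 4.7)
The plan is to prove the theorem in three stages: subsequential Hausdorff compactness of the level sets, an upper inclusion for $K\cap\Omega$ in terms of $\spt(\mu)$, and the reverse lower inclusion. Compactness is immediate: the closed sets $\overline{u_\eps^{-1}(I)}$ sit inside the compact set $\overline\Omega$, so by Blaschke's selection theorem a subsequence converges in Hausdorff distance to some compact $K\subset\overline\Omega$.

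For the upper inclusion, fix $\tau>0$ with $\overline I\subset[-1+\tau,1-\tau]$. Given $x\in K\cap(\Omega\setminus\spt(\mu))$, Hausdorff convergence supplies $x_\eps\to x$ with $u_\eps(x_\eps)\in\overline I$, and since $u(x)\in\{-1,+1\}$ this forces $|u_\eps(x_\eps)-u(x)|\geq\tau$. Theorem~\ref{theorem away n=3} then places $x$ in a finite set $\{x_1,\dots,x_N\}$ whose cardinality is controlled by $\tau$, $\bar\mu$ and $\bar\alpha$, yielding the stated bound on $N$ in terms of $I$ and $\limsup_{\eps\to0}\E_\eps(u_\eps)$. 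In the two special situations (namely $n=2$, or $n=3$ with $\alpha$ having no atoms in $\Omega\setminus\spt(\mu)$) Corollary~\ref{corollary recovery sequence} and Remark~\ref{remark n=2,3} upgrade essentially uniform to locally uniform convergence on $\Omega\setminus\spt(\mu)$, so this exceptional set is empty and $K\cap\Omega\subset\spt(\mu)$.

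For the lower inclusion $\spt(\mu)\cap\Omega\subset K$, take $x_0\in\spt(\mu)\cap\Omega$ and $r>0$ with $B_{2r}(x_0)\subset\Omega$, and suppose for contradiction that $u_\eps^{-1}(I)\cap B_r(x_0)=\emptyset$ along a subsequence. By continuity of $u_\eps$ and connectedness of $B_r(x_0)$, the image $u_\eps(B_r(x_0))$ lies in a single connected component $C_\eps$ of $\R\setminus I$. If $C_\eps$ is bounded it is contained in $[-1+\tau,1-\tau]$, so $W(u_\eps)\geq W_0(\tau)>0$ throughout $B_r(x_0)$ and $\mu_\eps(B_r(x_0))\geq W_0|B_r|/(c_0\eps)\to\infty$, contradicting the uniform bound $\mu_\eps(\R^n)\leq\bar\mu+1$. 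If $C_\eps$ is unbounded, say $u_\eps>b\coloneqq\max I>-1$ on $B_r(x_0)$, the argument is more delicate: combining the R\"oger--Sch\"atzle structure of $\mu$ as the weight measure of an integer-rectifiable varifold (so that $\mu$-a.e.\ $x_0\in\spt(\mu)$ admits a positive integer $(n-1)$-density and a tangent hyperplane) with the rescaling of Lemma~\ref{rescaling lemma} and the minimisation Lemma~\ref{minimisation lemma}, a blow-up of $u_\eps$ at such $x_0$ converges (along a further subsequence) to a finite stack of one-dimensional $\tanh$ transitions across the tangent hyperplane, each of which crosses every value in $(-1,1)$. This contradicts $u_\eps>b$ on $B_r(x_0)$. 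Hence $x_0\in K$ for $\mu$-a.e.\ $x_0\in\spt(\mu)$, and since $K$ is closed and these points are dense in $\spt(\mu)$, we conclude $\spt(\mu)\cap\Omega\subset K$.

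The main obstacle is the unbounded-component case of the lower inclusion: ruling out that $u_\eps$ is confined to one side of $I$ on $B_r(x_0)$ while $x_0\in\spt(\mu)$ cannot be done by pointwise sign arguments alone, and appears to require combining the density/rectifiability theory of the limit varifold with a blow-up analysis of the phase fields. A more elementary alternative, valid at least when $b\geq 0$, is to test the equation $-\eps\Delta u_\eps+\eps^{-1}W'(u_\eps)=v_\eps$ against $(u_\eps-1)\phi$ with a cutoff $\phi$ and to use $W'(u)(u-1)=u(u+1)(u-1)^2\geq b(b+1)(u-1)^2$ on $\{u>b\}$ together with $\|v_\eps\|_{L^2}^2=O(\eps)$ to deduce $\|u_\eps-1\|_{L^2(B_r)}^2=O(\eps^3)$ and hence $\mu_\eps(B_r(x_0))\to 0$, contradicting $x_0\in\spt(\mu)$; extending this to general $b\in(-1,1)$ and to the symmetric case $u_\eps<\min I$ is where the genuine technical work lies.
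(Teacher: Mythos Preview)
Your treatment of Hausdorff compactness and of the upper inclusion $K\cap\Omega\subset(\spt(\mu)\cap\Omega)\cup\{x_1,\dots,x_N\}$ is correct and matches the paper's proof: Blaschke selection, then Theorem~\ref{theorem away n=3} (finitely many bad points, each an atom of $\alpha$ of a size depending only on $\tau$) together with Corollary~\ref{corollary recovery sequence} and Remark~\ref{remark n=2,3} for the atom-free and two-dimensional cases.

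For the lower inclusion $\spt(\mu)\cap\Omega\subset K$, your decomposition into bounded and unbounded components of $\R\setminus I$ is a pleasant simplification for the bounded case (which the paper does not isolate), but you are right that the unbounded case is the real obstacle, and neither of your proposals closes it. Lemmas~\ref{rescaling lemma} and~\ref{minimisation lemma} do not yield the claimed convergence of a blow-up to a ``finite stack of $\tanh$ transitions'': Lemma~\ref{minimisation lemma} is a pure energy lower bound and carries no structural information about the profile. The paper's argument is different and more localized. One restricts to points $x\in\spt(\mu)$ possessing a tangent plane, at which the density is a Lebesgue point, and with $\alpha(\{x\})=0$ (such points are dense). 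After a blow-up reducing to $\mu=\theta\,\H^{n-1}|_{T_x\mu}$ and $\alpha=0$, the varifold convergence $V_\eps=\mu_\eps\otimes\nu_\eps\to V$ together with the vanishing of the discrepancy $|\xi_\eps|\to0$ lets one locate, for each small $\eps$, a point $y_\eps\in B_{r/2}(x)$ where $|u_\eps(y_\eps)|\leq 1-\tau$ \emph{and} the diffuse normal $\nu_\eps$ is well aligned with the plane's normal on a ball $B_{4L\eps}(y_\eps)$ with small discrepancy. A separate structural lemma (\cite[Lemma~3.12]{MR3590663} or the proof of \cite[Proposition~5.5]{roger:2006ta}) then shows that on such a ball $u_\eps$ is $C^0$-close to a one-dimensional optimal profile and therefore makes an almost full transition between $-1$ and $1$; in particular it hits $I$. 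This local optimal-profile lemma is the ingredient your sketch is missing.

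Your elementary alternative is also genuinely incomplete, not merely technically so. The coercivity $W'(u)(u-1)\geq b(b+1)(u-1)^2$ on $\{u>b\}$ needs $b>0$ strictly (the constant vanishes at $b=0$ and is negative for $b\in(-1,0)$), so e.g.\ $I=\{0\}$ is not covered. More importantly, if $b\leq 0$ and $u_\eps>b$ on $B_r(x_0)$ one can only conclude $u\equiv 1$ a.e.\ there, but this does \emph{not} force $\mu(B_r(x_0))=0$: the limit varifold may carry even integer multiplicity, in which case $|Du|=0$ across $\spt(\mu)$ while $\mu$ is nontrivial. Ruling out that scenario is exactly what forces one back to the varifold-level argument above; it is not a matter of bookkeeping.
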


\begin{proof}
In accordance with convention, we may replace $u_\eps^{-1}(I)$ with its closure without affecting the limit. Since $u_\eps^{-1}(I) \subset\Omega$ is bounded, there is a compact set $K\subset\overline\Omega$ and a subsequence such that
\[
u_\eps^{-1}(I)\to K
\]
in Hausdorff distance. $K$ can be calculated as the Kuratowski limit
\[
K = \{x\in \overline\Omega\:|\:\exists\ x_\eps\in u_\eps^{-1}(I)\text{ such that }x_\eps\to x\}.
\]
In \cite[Theorem 2.2 and Lemma 3.15]{MR3590663}, we showed that $\spt(\mu) \subset K$ under the assumption that $u_\eps \in -1 + W^{2,2}_0(\Omega)$. Our original argument had to be more sophisticated to imply a stronger result, so we shall sketch a simplified version of the proof.

\begin{enumerate}
\item $\spt(\mu)$ is rectifiable and when we introduce the diffuse normal direction $\nu_\eps = \frac{\nabla u_\eps}{|\nabla u_\eps|}$, the varifolds $V_\eps \coloneqq  \mu_\eps \otimes \nu_\eps$ converge to the varifold introduced by $\mu$ as Radon measures on $\R^n\times G(n, n-1)$ due to \cite[Proposition 4.1]{roger:2006ta}. The discrepancy measures 
\[
|\xi_\eps|(U) = \int_U \left|\frac\eps2\,|\nabla u_\eps|^2 - \frac1\eps\,W(u_\eps)\right|\dx
\]
vanish as $|\xi_\eps|\wto 0$ \cite[Proposition 4.9]{roger:2006ta}.

\item Since points $x$ at which $\mu$ possesses a tangent space $T_x\mu$ and which are Lebesgue points of the density $\theta$ and not atoms of $\alpha$ lie dense in $\spt(\mu)$, it suffices to show the following: For $x$ as specified and $r>0$, there is $z\in u_\eps^{-1}(I) \cap B_r(x)$ for all sufficiently small $\eps>0$. 

Blowing up along a suitable sequence of dilations $\tilde u_\eps(y) = u_\eps(x+\lambda_\eps \,y)$ with $\lambda_\eps \to\infty$, $\eps\,\lambda_\eps\to 0$, we can further reduce this to the case in which $\mu = \theta\,\H^{n-1}|_{T_x\mu}$, $\alpha =0$ like in \cite[Theorem 5.1]{roger:2006ta}. 

\item Let $0<\tau<1-1/\sqrt{2}$ and $L, \gamma>0$ to be specified later. When we use this varifold convergence and $|\xi_\eps|\wto0$, for all sufficiently small $\eps>0$ we can find a point $y = y_\eps \in B_{r/2}(x)$ such that $|u_\eps(y)|\leq 1-\tau$ and (up to rotation)
\[
|\xi_\eps|(B_{4L\eps}(y)) + \int_{B_{4L\eps}(y)} 1 - \langle \nu_{\eps},e_n\rangle^2\d\mu_\eps \leq \gamma\,(4L\eps)^{n-1}.
\]
Imagining $L$ to be large and $\gamma$ small, this roughly expresses that $\nabla u_\eps$ points in the direction $e_n$ on average over $B_{4L\eps}(y)$ and that the discrepancy measure over the ball is small.

\item If we choose $L$ large enough and $\gamma$ small, then \cite[Lemma 3.12]{MR3590663} or the proof of \cite[Proposition 5.5]{roger:2006ta} show that $u_\eps$ is $C^0$-close to an optimal profile in direction $e_n$ over $B_{4L\eps}(y)$ and makes an almost full transition from $-1$ to $1$ within that ball. We thus find $z\in B_{4L\eps}(y)\subset B_r(x)$ such that $u_\eps(z)\in I$.
\end{enumerate}

For the other direction, assume that $x\in \Omega \cap K\setminus\spt(\mu)$. Take $r>0$ such that $B_{r}(x)\subset \Omega\setminus\spt(\mu)$. If $n=2$ or $n=3$ and $\alpha$ has no atoms in $\overline{B_r(x)}$, we see that $|u_\eps|\to 1$ uniformly on $\overline{B_r(x)}$, which leads to a contradiction. If $n=3$ in general, then $x$ must be an atom of $\alpha$ with a minimal size depending only on
\[
\sup_{\theta\in I}|\theta|<1.
\]
Since there can only be finitely many such points, the theorem is proven.
\end{proof}

\begin{remark}
In the case where we have $u_\eps\in -1 + W^{2,2}_0(\Omega)$, we can extend $u_\eps$ to a larger domain as a constant function outside $\Omega$. Thus we obtain the stronger result that $u_\eps^{-1}(I)\to \spt(\mu)$ if $n=2$ (or if $n=3$ and $\alpha$ does not have atoms outside $\spt(\mu)$) and $u_\eps^{-1}(I)\to \spt(\mu)\cup \{x_1,\dots,x_N\}$ (up to a subsequence) if $n=3$ for a finite collection of points $x_i\in \overline\Omega$. If $n=2$ or $n=3$ and $\alpha$ has no atoms, the uniqueness of the limit implies that actually the whole sequence $u_\eps^{-1}(I)$ converges to $\spt(\mu)$. The same holds for periodic boundary conditions.

Without boundary conditions, the relationship of $K\cap \partial\Omega$ and $\spt(\mu)\cap \partial\Omega$ is more complicated. If $\partial\Omega\in C^2$, we may consider an optimal interface transition for $\partial \Omega$ such that only the positive part of the transition lies inside $\Omega$. This induces the measure $\mu = 1/2\,\H^{n-1}|_{\partial\Omega}$. So $\mu$ may well fail to be an integral varifold at the boundary, and the inclusion $\spt(\mu)\cap\partial\Omega \not\subset K\cap \partial\Omega$ need not hold (take $I\cc (-1,0)$). Further details are given in a forthcoming publication \cite{DW_bound}.
\end{remark}

\section{Applications}\label{section applications}

\subsection{Enforcing Connectedness in Limits of Phase Fields}
Our research into the uniform convergence was motivated by difficulties in \cite{MR3590663} where we needed phase fields to converge along curves (thus objects of co-dimension $2$) also in dimension $n=3$. Accordingly, our results can be used to simplify the proof of \cite[Theorem 2.5]{MR3590663}, but there are many other applications.

\begin{remark}
In the proof of \cite[Theorem 2.5]{MR3590663}, once we have shown that (in the terminology of that Theorem) $u_\eps\in [\theta_1,\theta_2] \cc (-1,1)$ on a connected set $K_\eps$ bounded away from $\spt(\mu_\eps)$ which contains two distinct points with a uniformly positive separation, we can obtain a contradiction more directly. All points in the Hausdorff limit $K$ of $K_\eps$ must have have the distance property, and since $K$ is connected and contains two distinct points, it is not a finite collection of points. Since furthermore $K\subset\Omega\setminus\spt(\mu)$, this poses a contradiction to Theorem \ref{theorem away n=3}.
\end{remark}

\subsection{Minimising Sequences Converge Uniformly}\label{section minimisers converge uniformly}
In the second application, we demonstrate how essentially uniform convergence can be used to obtain uniform convergence under additional assumptions. It formalises the intuition that phase fields have no energetic incentive not to converge uniformly in three dimensions. 

\begin{lemma}\label{lemma uniform convergence minimisers}
Let $X= -1 + W^{2,2}_0(\Omega)$, $S, V\in\R$, $\lambda>0$, $\chi\geq 0$ and
\[
\E_\eps\colon X\to [0,\infty), \qquad \E_\eps(u) = \W_\eps(u) + \lambda\,(S_\eps(u)-S)^2 + \chi\,\left(\frac12\int_\Omega (u+1)\dx - V\right)^2
\]
an associated energy functional. Furthermore, assume that $u_\eps\in X$ and $u\in BV(\Omega)$ are such that 
\[
\E_\eps(u_\eps) = \min_{v\in X} \E_\eps(v), \qquad u_\eps\to u\text{ in }L^1(\Omega).
\]
As usual, let $\mu_\eps\wto\mu$, $\alpha_\eps\wto \alpha$. Then $\spt(\alpha)\subset \spt(\mu)$. In particular, $u_\eps\to u$ uniformly on compact sets $K\subset \overline\Omega\setminus\spt(\mu)$.
\end{lemma}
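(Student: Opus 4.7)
The plan is to prove $\spt(\alpha)\subset\spt(\mu)$; the uniform convergence on compact $K\subset\overline\Omega\setminus\spt(\mu)$ then follows immediately from Corollary \ref{corollary recovery sequence}(2) applied to a small open neighborhood of $K$, after extending each $u_\eps$ by $-1$ across $\partial\Omega$ (which is consistent with $u_\eps\in -1+W^{2,2}_0(\Omega)$ in the sense of traces and preserves the $W^{2,2}$-regularity) so as to handle the portion of $K$ meeting $\partial\Omega$.

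I show the inclusion by contradiction. Assume $x_0\in\spt(\alpha)\setminus\spt(\mu)$, pick $r>0$ with $\overline{B_{2r}(x_0)}\subset\Omega\setminus\spt(\mu)$, set $\gamma\coloneqq\alpha(B_r(x_0))>0$, and note that $u\equiv c_0$ on the connected set $B_{2r}(x_0)$ for some $c_0\in\{-1,+1\}$. The strategy is to construct a competitor $\tilde u_\eps\in X$ agreeing with $u_\eps$ outside a small neighborhood of $x_0$ that strictly lowers $\E_\eps$, contradicting minimality. To select the interpolation annulus, three facts conspire: Theorems \ref{theorem outside n=3} and \ref{theorem away n=3} leave only finitely many bad points in $\overline{B_{2r}(x_0)}$ where $u_\eps$ fails essentially uniform convergence to $c_0$; finiteness of $\alpha$ leaves only countably many radii $\rho$ with $\alpha(\partial B_\rho(x_0))>0$; and a Fubini argument in the radial direction provides radii whose thin annulus has small $\mu_\eps$-mass. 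Combining these, I can pick $\rho_\eps\in(r,2r)$ and $\delta_\eps\downarrow 0$ with $A_\eps\coloneqq B_{\rho_\eps+\delta_\eps}(x_0)\setminus B_{\rho_\eps-\delta_\eps}(x_0)$ free of bad points, $\alpha(\overline{A_\eps})\to 0$, and $\mu_\eps(A_\eps)$ decaying fast enough to control the error terms below; essentially uniform convergence then forces $u_\eps\to c_0$ uniformly on $\overline{A_\eps}$. With a smooth cutoff $\eta_\eps\in C^\infty_c(B_{\rho_\eps+\delta_\eps})$ equal to $1$ on $B_{\rho_\eps-\delta_\eps}$ and $|\nabla^j\eta_\eps|\leq C\delta_\eps^{-j}$, set
\[
\tilde u_\eps\coloneqq(1-\eta_\eps)\,u_\eps+\eta_\eps\,c_0\in X.
\]

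Finally I compare the three terms of $\E_\eps$. The volume contribution differs by $o(1)$ because $u_\eps\to c_0$ in $L^1(B_{\rho_\eps+\delta_\eps})$ by the $L^p$-convergence corollary following Lemma \ref{regularity lemma}. The Modica--Mortola contribution also differs by $o(1)$: on one side $S_\eps(u_\eps;B_{\rho_\eps+\delta_\eps})=\mu_\eps(B_{\rho_\eps+\delta_\eps})\to 0$, and on the other the inequality $(u_\eps-c_0)^2\leq 4W(u_\eps)$ (valid near $c_0$) together with the cutoff bounds absorbs $S_\eps(\tilde u_\eps;A_\eps)$ into a constant multiple of $\mu_\eps(A_\eps)\to 0$. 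The main obstacle is the Willmore term on $A_\eps$: expanding $\eps\Delta\tilde u_\eps-\eps^{-1}W'(\tilde u_\eps)$ produces a principal part $-(1-\eta_\eps)^3 v_\eps$ (with squared integral $\leq\alpha_\eps(A_\eps)\leq\alpha(\overline{A_\eps})+o(1)\to 0$), two cutoff-derivative terms handled via the bounds on $\eta_\eps$, the rate $\mu_\eps(A_\eps)\to 0$ and the uniform closeness of $u_\eps$ to $c_0$, and a cubic Taylor remainder of $\eps^{-1}W'(\tilde u_\eps)$ of pointwise size $O(|u_\eps-c_0|/\eps)$ whose integrated contribution is dominated by a constant multiple of $\mu_\eps(A_\eps)/\eps^{2}$. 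This last estimate forces a nontrivial parameter balance between $\delta_\eps$, $\eps$, and $\mu_\eps(A_\eps)$ and is the technical heart of the argument, enabled by the Fubini-based choice of $\rho_\eps$ and $\delta_\eps$. Putting the estimates together, $\W_\eps(\tilde u_\eps;B_{\rho_\eps+\delta_\eps})\to 0$ while $\W_\eps(u_\eps;B_{\rho_\eps+\delta_\eps})\to\alpha(\overline{B_{\rho_\eps+\delta_\eps}})\geq\gamma$, so $\E_\eps(\tilde u_\eps)\leq\E_\eps(u_\eps)-\gamma/2$ for all sufficiently small $\eps$, contradicting minimality.
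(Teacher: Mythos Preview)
Your overall strategy coincides with the paper's: assume for contradiction that $\alpha(B_r(x_0))>0$ for some ball $B_{2r}(x_0)\subset\Omega\setminus\spt(\mu)$, interpolate $u_\eps$ to the pure phase across a thin annulus via a cutoff, and show that the competitor has strictly smaller energy. The gap is precisely at the step you flag as ``the technical heart'': controlling the Taylor remainder of $\eps^{-1}W'$, whose contribution to $\W_\eps$ you correctly identify as being of order $\mu_\eps(A_\eps)/\eps^{2}$. You claim this is ``enabled by the Fubini-based choice of $\rho_\eps$ and $\delta_\eps$'', but Fubini in the radial variable only yields $\mu_\eps(A_\eps)\lesssim \mu_\eps(B_{2r}(x_0))\,\delta_\eps/r$, and the rate at which $\mu_\eps(B_{2r}(x_0))\to 0$ is not a priori quantified --- Example~\ref{second example} shows it can be arbitrarily slow for general finite-energy sequences. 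If you try to balance parameters, the remainder term forces $\mu_\eps(B_{2r})\,\delta_\eps\ll\eps^{2}$ while the cutoff--gradient term $\eps\,\nabla\eta_\eps\cdot\nabla u_\eps$ forces $\delta_\eps^{2}\gg\mu_\eps(A_\eps)$; together these require $\mu_\eps(B_{2r})\ll\eps$, which is not available from essentially uniform convergence and weak convergence of measures alone.

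The missing ingredient is a quantitative PDE estimate: on any region where $|u_\eps|\geq 1/\sqrt{2}$, one actually has $\mu_\eps=O(\eps^{2})$. The paper obtains this as a ``from-below'' analogue of Corollary~\ref{lemma quadratic decay} (equation~\eqref{substitute equation}), by sharpening \cite[Proposition~3.5]{roger:2006ta}. With that in hand, one first uses essentially uniform convergence to fix a macroscopic annulus $R$ between radii $r_1<r_2$, avoiding the finitely many bad points, on which $|u_\eps|\geq 1/\sqrt{2}$; then $\int_R\eps^{-3}W'(u_\eps)^{2}\leq C$. Fubini is now applied to the quantity $\eps^{-2}\mu_\eps+\eps^{-3}W'(u_\eps)^{2}$ (not to $\mu_\eps$ itself) to locate a sub-annulus of width $|\log\eps|^{-1}$ where this density is $\leq C/|\log\eps|$; the cutoff then satisfies $|\nabla\eta|\leq C|\log\eps|$, $|\Delta\eta|\leq C|\log\eps|^{2}$, and all error terms go to zero. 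In short, your outline is right, but the Fubini step succeeds only after inserting the $O(\eps^{2})$ bound, which is the genuine technical input your proposal does not supply.
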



The parameters $S$ and $V$ play the roles of a preferred surface area and enclosed volume and $\lambda, \chi$ express the strength of the preference.

The existence of minimisers of $\E_\eps$ follows from the direct method of the calculus of variations and Sobolev embedding theorems. A similar statement holds if $X$ is $W^{2,2}(\Omega)$ or the subspace of $W^{2,2}(\Omega)$ with vanishing normal derivatives.

\begin{proof}
Note that the sequence $\bar u_\eps\equiv -1$ keeps $\E_\eps$ uniformly bounded, so we have $\limsup_{\eps\to 0}\E_\eps(u_\eps)<\infty$. Let us consider a subsequence $\eps\to 0$ such that all three terms in the energy have a limit.

By Corollary \ref{corollary recovery sequence}, $u_\eps\to u$ locally uniformly in $\overline\Omega\setminus\spt(\mu)$ if $\alpha$ has no atoms outside $\spt(\mu)$, so it suffices to show the inclusion $\spt(\alpha)\subset\spt(\mu)$. By extending $u_\eps$ to a slightly larger domain $\Omega'$ as a constant function, we only need to consider the case that $\spt(\alpha)\cc\Omega$. Recall that the support of a measure is the collection of all points for which any neighbourhood has positive measure. Thus for a contradiction, we may assume that there exists a ball $B_{2r}(x)\subset \Omega\setminus\spt(\mu)$ such that $\alpha(B_r(x))>0$. 

Since there are only finitely many points $x\in\overline\Omega\setminus\spt(\mu)$ such that there exists a sequence $x_\eps\to x$ with the property that $\limsup_{\eps\to 0}|\,|u_\eps(x_\eps)|-1|\geq \tau$ for any given $\tau>0$, we can choose two radii $r< r_1 < r_2 < 2r$ and the ring domain
\[
R\coloneqq  \{y\in \Omega \:|\: r_1 < |x-y|< r_2\}
\]
such that $|u_\eps|\geq 1/\sqrt{2}$ on $\overline{R'}$ for all sufficiently small $\eps$ and a slightly larger set $R'$ such that $R\cc R'$. We now need a statement similar to Corollary \ref{lemma quadratic decay}, but from below $+1$. Since for phase-fields which stay close enough to $\pm 1$, there is no significant difference between $u = 1 + (u-1)$ and $\bar u = 1 - (u-1)$, it is clear that such an estimate should hold. The proof of \cite[Proposition 3.5]{roger:2006ta} can be sharpened to show that
\begin{equation}\label{substitute equation}
\mu_\eps(B_r\cap \{1-\tau \leq |u_\eps|\leq 1\} \leq \tau\,\mu_\eps(B_{2r}\cap \{|u_\eps|\leq 1-\tau\}) + \eps^2\,\alpha_\eps(B_{2r}) + o(\eps^2),
\end{equation}
and thus in our case $\mu_\eps(B_r) = o(\eps^2)$. Details will be given in a future publication. Since we know that 
\[
\int_R\frac1\eps\,W'(u_\eps)^2\dx \leq C\,\eps^2,
\]
we can pick a ring
\[
R_\eps = \{y\in \Omega \:|\: r_\eps < |x-y|< r_\eps+ |\log(\eps)|^{-1}\} \cc R
\]
such that
\[
\eps^{-2}\,\mu_\eps(R_\eps) + \int_{R_\eps}\frac1{\eps^3}\,W'(u_\eps)^2\dx \leq C\,|\log(\eps)|^{-1}.
\]
Then we choose a cut-off function $\eta$ such that $\eta \equiv 1$ inside $B_{r_\eps}(x)$, $\eta\equiv 0$ outside $B_{r_\eps + |\log\eps|\inv}(x)$, $|\nabla \eta|\leq 2\,|\log\eps|$, $|\Delta\eps|\leq C\,|\log \eps|^2$ and define
\[
\hat u_\eps = (1-\eta) \,u_\eps + \eta.
\]
Since $|u_\eps|\geq 1/\sqrt{2}$ on $R$, we can suppose that without loss of generality $u_\eps\geq 1/\sqrt{2}$. It follows directly from $\hat u_\eps\geq u_\eps>0$ that 
\[
\int_R\frac1\eps\,W(\hat u_\eps)\dx \leq \int_R\frac1\eps \,W(u_\eps)\dx
\]
and furthermore $0 = \hat \mu_\eps(B_{r_1}(x)) \leq \mu_\eps(B_{r_1}(x))$. Finally, we note that
\[
\int_R\frac\eps2\,|\nabla \hat u_\eps|^2 \dx \leq \eps \int_R|\nabla u_\eps|^2\,(1-\eta)^2  + (u_\eps-1)^2\,|\nabla \eta|^2\dx = O(\eps^2\,|\log\eps|^2)
\]
due to \eqref{substitute equation}, so in particular that $\hat\mu_\eps\wto\mu$ and $\lim_{\eps\to 0}S_\eps(\hat u_\eps) = \lim_{\eps\to 0}S_\eps(u_\eps)$. Since $u_\eps\to 1$ in $L^1(B_{2r}(x))$ already before the modification, we do not change the limiting integral either:
\[
\lim_{\eps\to 0} \int_\Omega \hat u_\eps\dx = \lim_{\eps\to 0} \int_\Omega u_\eps\dx.
\]
Hence the last two terms in $\E_\eps$ converge to the same limits as before. Thus it suffices to show that $\liminf_{\eps\to 0}\W_\eps(\hat u_\eps)< \liminf_{\eps\to 0}\W_\eps(u_\eps)$ to see that
\[
\liminf_{\eps\to 0}\E_\eps(\hat u_\eps) < \liminf_{\eps\to 0}\E_\eps(u_\eps),
\]
which means that $u_\eps$ cannot be a minimiser of $\E_\eps$ for some small $\eps>0$. This is the contradiction we are looking for. So calculate
\begin{align*}
\hat \alpha_\eps&(B_{r_\eps+|\log\eps|\inv}(x)) = \hat\alpha_\eps(R_\eps)\\
	&= \frac1{c_0\,\eps} \int_{R_\eps} \left(\eps\,\Delta \hat u_\eps - \frac1\eps\,W'(\hat u_\eps)\right)^2\dx\\
	&\leq \frac{1+\delta}{c_0\,\eps} \int_{R_\eps} \left(\eps\,\Delta u_\eps - \frac1\eps\,W'(u_\eps)\right)^2(1-\eta)^2\dx\\
		 &\qquad  + \left(1 + \frac1\delta\right) \frac1{c_0\,\eps} \int_{R_\eps} \left(-2\eps \,\langle \nabla \eta, \nabla u_\eps\rangle + \eps\,(1-u_\eps)\,\Delta\eta + \frac1\eps[W'(u_\eps)(1-\eta) - W'(\hat u_\eps)]\right)^2\dx\\
	&\leq \frac{1+\delta}{c_0\,\eps} \int_{R_\eps} \left(\eps\,\Delta u_\eps - \frac1\eps\,W'(u_\eps)\right)^2\eta^2\dx
		+\left(1+\frac1\delta\right) \frac{3}{c_0\,\eps^3}\int_{R_\eps} [W'(u_\eps)(1-\eta)  - W'(\hat u_\eps)]^2\dx\\
		 &\qquad \qquad + \left(1 + \frac1\delta\right) \frac{3\,\eps}{c_0} \int_{R_\eps} 4\,\langle \nabla \eta, \nabla u_\eps\rangle^2 + (u_\eps-1)^2\,(\Delta\eta)^2\dx \\
	&\leq (1+\delta)\,\alpha_\eps(R_\eps) +\left(1+\frac1\delta\right) \frac{3}{c_0\,\eps^3}\int_{R_\eps} W'(u_\eps)^2\dx  + \left(1 + \frac1\delta\right)\,C\,\left(\eps^2\,|\log\eps|^2 + \eps^4\,|\log\eps|^4\right)\\
	&\leq  (1+\delta)\,\alpha_\eps(R_\eps) +\left(1+\frac1\delta\right)\frac{3}{c_0\,|\log\eps|}  + \left(1 + \frac1\delta\right)\,C\,\left(\eps^2\,|\log\eps|^2 + \eps^4\,|\log\eps|^4\right)
\end{align*}
for $\delta>0$. Here we used that $\hat u_\eps$ is a convex combination of $u_\eps$ and $1$ pointwise, so that the estimate in the middle integral works. Taking first $\eps\to 0$ and then $\delta\to 0$, it follows that
\[
\hat\alpha(B_{2r}) =\alpha(B_{2r}\setminus B_{r_0}) < \alpha (B_{2r})
\]
where $r_0 = \lim_{\eps\to 0}r_\eps>r$. This implies the contradiction and concludes the proof.
\end{proof}

Cases of independent geometric interest are the formal limits $\lambda=\chi = \infty$ and $\lambda=\infty$, $\chi =0$. The problem becomes more complex, and in the first case, solutions can only exist if $V < \L^n(\Omega)$ and $S>c_0$ for some $c_0$ depending on $V$ through the isoperimetric inequality and the geometry of $\Omega$. These limits can be expressed in phase field models for example in the choice of the function space
\[
X_\eps(\Omega) = \{u\in -1+ W^{2,2}_0(\Omega)\:|\: S_\eps(u) = S\}
\]
or by choosing $\lambda = \lambda_\eps = \eps^{-1}$, and similarly in the first case. Our simple modification clearly does not go through in either scenario, but we believe that the same result should still hold. We will see that uniform convergence still holds for a penalised functional when we add a version of the topological term discussed in \cite{MR3590663}. For simplicity, we restrict ourselves to the case discussed there, but a total integral term could easily be included.

Let us briefly recall the structure of the topological functionals. Take $\phi_1\in C_c( 1/\sqrt{2}, 1)$ to be a non-negative function with connected support, $\phi_1\not\equiv 0$ and $F_1\geq 0$ a continuous function such that $F_1\cdot\phi_1\equiv 0$ and $F_1(\pm 1) >0$. Set $\phi_2(t) = \phi_1(-t)$ and $F_2(t) = F_1(-t)$. Then we have the geodesic distances
\[
d^{F_i(u)}(x,y) = \inf\left\{\int_K F_i(u)\d\H^1\:\big|\:K\text{ connected and }x,y\in K\right\}
\] 
and the topological energies
\[
C_\eps^i(u) = \frac1{\eps^2}\int_{\Omega\times \Omega}\phi_i(u(x))\,\phi_i(u(y))\,d^{F_i(u)}(x,y)\dx\dy
\]
for $i=1,2$.

\begin{lemma}
Let $X = -1 + W^{2,2}_0(\Omega)$, $\sigma\in (0,4)$, $\kappa>3$ and
\[
\E_\eps\colon X\to [0,\infty), \qquad \E_\eps(u) = \W_\eps(u) + \eps^{-\sigma}\,(S_\eps(u)-S)^2 + \eps^{-\kappa}\,\left(C^1_\eps + C^2_\eps\right)(u).
\]
Assume that $u_\eps\in X$ are minimisers of $\E_\eps$ and $u, \alpha, \mu$ as usual. Then $\spt(\alpha)\subset \spt(\mu)$ and $u_\eps\to u$ locally uniformly in $\overline\Omega\setminus\spt(\mu)$.
\end{lemma}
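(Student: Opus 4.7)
The plan is to mirror the contradiction argument of Lemma \ref{lemma uniform convergence minimisers}: assume $\spt(\alpha) \not\subset \spt(\mu)$, locate a ball $B_{2r}(x) \subset \Omega\setminus\spt(\mu)$ with $\alpha(B_r(x))\geq c_*>0$, and build the same ``fill-in'' competitor $\hat u_\eps = (1-\eta)u_\eps + \eta$ on an inner ball $B_{r_\eps}$, with $\eta$ supported in $B_{r_\eps + |\log\eps|^{-1}}$ and the sub-ring $R_\eps$ chosen by the usual averaging. The Willmore calculation from the previous proof is unchanged and yields
\[
\limsup_{\eps\to 0}[\W_\eps(\hat u_\eps) - \W_\eps(u_\eps)] \leq -\alpha(B_{r_0}) \leq -c_*.
\]
Against this fixed positive gain we must ensure the two penalty changes are each $o(1)$, despite the diverging weights $\eps^{-\sigma}$ and $\eps^{-\kappa}$.

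The essential new ingredient is an a priori decay for each penalty, obtained by testing $u_\eps$ against a recovery sequence for a single smooth sphere of area $S$ (whose connected interface makes $C^i_\eps$ vanish and whose Willmore energy is bounded). This gives $\E_\eps(u_\eps) \leq C$ uniformly and hence
\[
\W_\eps(u_\eps) \leq C, \qquad |S_\eps(u_\eps) - S| = O(\eps^{\sigma/2}), \qquad (C^1_\eps + C^2_\eps)(u_\eps) = O(\eps^\kappa).
\]
The bound on $\W_\eps(u_\eps)$ puts us inside the framework of Section \ref{section n=3}. Choosing $\delta_0 > 0$ with $\operatorname{supp}(\phi_1) \subset [1/\sqrt 2 + \delta_0, 1-\delta_0]$, Theorem \ref{theorem away n=3} lets us pick radii $r < r_1 < r_2 < 2r$ so that the closed annulus $\bar R$ avoids all $\delta_0$-bad points, giving $u_\eps \geq 1-\delta_0$ on $\bar R$ for small $\eps$. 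Then $\hat u_\eps \geq 1-\delta_0$ throughout the modification region $B_{r_\eps+|\log\eps|^{-1}}$, so $\phi_i(\hat u_\eps)\equiv 0$ there for $i=1,2$.

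For the surface penalty, monotonicity of $W$ on $[1/\sqrt 2, 1]$ together with the averaging bounds on $R_\eps$ and \eqref{substitute equation} give $|S_\eps(\hat u_\eps) - S_\eps(u_\eps)| = O(\eps^2|\log\eps|^2)$, exactly as in Lemma \ref{lemma uniform convergence minimisers}. Combining with the identity $(a-S)^2 - (b-S)^2 = (a-b)^2 + 2(a-b)(b-S)$ and $|S_\eps(u_\eps) - S| = O(\eps^{\sigma/2})$ yields
\[
\eps^{-\sigma}\bigl[(S_\eps(\hat u_\eps) - S)^2 - (S_\eps(u_\eps) - S)^2\bigr] = O(\eps^{4-\sigma}|\log\eps|^4) + O(\eps^{2-\sigma/2}|\log\eps|^2) = o(1),
\]
explaining the sharp range $\sigma < 4$.

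The hard part will be the topological penalty. Since $\phi_i(\hat u_\eps)\equiv 0$ on $B_{r_\eps+|\log\eps|^{-1}}$ and $\hat u_\eps = u_\eps$ outside, $C^i_\eps(\hat u_\eps)$ is an integral on $(\Omega\setminus B_{r_\eps+|\log\eps|^{-1}})^2$ with the same $\phi_i(u_\eps)$-factors as $C^i_\eps(u_\eps)$; only the geodesic distances $d^{F_i(\hat u_\eps)}$ and $d^{F_i(u_\eps)}$ may differ. Because filling in replaces values in $\operatorname{supp}(\phi_i)$ (where $F_i = 0$) by values near $1$ (where $F_i(1) > 0$), interior ``shortcuts'' through interface fragments inside $B_{r_\eps}$ are lost and $d^{F_i(\hat u_\eps)} \geq d^{F_i(u_\eps)}$. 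The plan is to control this increase by a detour argument that reroutes optimal paths through $R_\eps$, combined with the a priori bound $\|\phi_i(u_\eps)\|_1 \leq C\eps$ (following from $\mu_\eps(\Omega) \leq \bar\mu + 1$ and $W \geq c > 0$ on $\operatorname{supp}(\phi_i)$) and the observation that Theorems \ref{theorem outside n=3}--\ref{theorem away n=3} force any point in $\spt(\alpha)\setminus\spt(\mu)$ to be an atom of $\alpha$. The hypothesis $\kappa > 3$ is used to absorb the three-dimensional scale of the modification region into the diverging weight and secure $\eps^{-\kappa}[(C^1_\eps + C^2_\eps)(\hat u_\eps) - (C^1_\eps + C^2_\eps)(u_\eps)] = o(1)$, finalizing the contradiction to minimality.
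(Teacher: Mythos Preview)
Your approach reverses the logical order of the paper's argument, and this creates a genuine gap in the surface-penalty step.

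The claim that $|S_\eps(\hat u_\eps) - S_\eps(u_\eps)| = O(\eps^2|\log\eps|^2)$ ``exactly as in Lemma~\ref{lemma uniform convergence minimisers}'' is not what that lemma establishes. There the modification gives only the one-sided bound $S_\eps(\hat u_\eps) \le S_\eps(u_\eps) + O(\eps^2|\log\eps|^2)$: on the inner ball $B_{r_\eps}$ one has $\hat\mu_\eps(B_{r_\eps})=0$ while $\mu_\eps(B_{r_\eps})$ may be large, so $S_\eps(\hat u_\eps) - S_\eps(u_\eps) = -\mu_\eps(B_{r_\eps}) + O(\eps^2|\log\eps|^2)$. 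Equation~\eqref{substitute equation} only controls the mass in the layer $\{1-\tau\le |u_\eps|\le 1\}$, not the bubble region $\{|u_\eps|<1-\tau\}$. In the configuration of Example~\ref{second example} one has $\mu_\eps(B_{r_\eps})\sim r_\eps^2$ with $r_\eps\to 0$ arbitrarily slowly, so $\eps^{-\sigma/2}\,\mu_\eps(B_{r_\eps})$ need not vanish and your cross-term estimate $\eps^{-\sigma}\cdot 2(a-b)(b-S)=O(\eps^{2-\sigma/2}|\log\eps|^2)$ fails. Since your annulus $\bar R$ was chosen only to avoid the finitely many $\delta_0$-bad points, a bubble at the atom $x$ inside $B_{r_1}$ is precisely what your setup allows.

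The paper handles this by using the topological penalty \emph{before} modifying: Step~1 shows (via \cite[Lemma 3.15 and Theorem 2.4]{MR3590663} and the condition $\kappa>3$) that any sequence $y_\eps\to y\notin\spt(\mu)$ with $u_\eps(y_\eps)\in\operatorname{supp}(\phi_1)$ would make $\eps^{-\kappa}(C^1_\eps+C^2_\eps)(u_\eps)$ unbounded, contradicting $\E_\eps(u_\eps)\le C$. By continuity this forces $|u_\eps|\ge 1/\sqrt2$ on the \emph{entire} ball $B_{r_2}(x)$, not merely on an annulus. Only then does \eqref{substitute equation} yield $\mu_\eps(B_{r_2})=O(\eps^2)$, after which the surface-penalty computation you wrote down becomes valid and the modification also leaves $\phi_i(u_\eps)$, $\phi_i(\hat u_\eps)$ identically zero on the modified region. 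Your ``detour argument'' for the topological penalty, which you yourself flag as the hard part, is never needed in the paper's scheme; conversely, it is left as a plan rather than a proof in yours, and it is not clear how the hypothesis $\kappa>3$ would enter it.
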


\begin{proof}[Sketch of Proof:]
The proof proceeds in two steps. In the first one, we assume that if $x\in \spt(\mu)$ and $y\in \Omega$ such that $y_\eps\to y$ and $u_\eps(y_\eps) \in \mathrm{supp}(\phi_1)$. Then we deduce that
\[
\liminf_{\eps\to 0} \left(\frac1{\eps}\int_{B_r(x)}\phi_1(u_\eps(x))\dx\right)\left(\frac1{\eps^3}\int_{B_r(y)}\phi_1(u_\eps(y))\dy\right)>0
\]
for all $r>0$ using \cite[Lemma 3.15]{MR3590663} on the first term and infinitesimal H\"older regularity on the second. Repeating the proof of \cite[Theorem 2.4]{MR3590663}, we obtain a contradiction. Having excluded the situation of Example \ref{second example}, we use the same modification as in Lemma \ref{lemma uniform convergence minimisers} on $u_\eps$ in the second step of the proof. Since we know that $|u_\eps|\geq 1/\sqrt{2}$ on the whole ball $B_{r_2}(x)$ rather than just the ring domain $R$, the difference in the diffuse area functional can be controlled to be $o(\eps^\sigma)$ for all $\sigma<2$. Thus the same argument as above goes through.
\end{proof}

\subsection{Hausdorff-Convergence of Manifolds with Bounded Willmore Energy} Last but not least we show how our results can be used to obtain results on the interplay between varifold and Hausdorff convergence using only our PDE techniques and just a bare minimum of geometric measure theory.

\begin{lemma}
Let $M_k$ be a sequence of compact orientable $C^2$-manifolds embedded in $\R^3$ and $V_k$ their induced varifolds. Assume that $V$ is a varifold with mass measure $||V||$ such that
\[
V_k\wto V, \qquad \limsup_{k\to\infty}\left[\W(V_k) + \H^2(M_k)\right] <\infty.
\]
Then $\lim_{k\to\infty}M_k = \spt(V) \cup \{x_1,\dots, x_N\}$ for a finite collection of points $x_i$, $i=1,\dots, N$ in the sense of convergence in Hausdorff distance for every subsequence along which the limit exists. Moreover, if $M_k$ is connected for all $k\in\N$ or $\lim_{k\to\infty}\W(V_k) = \W(V)$, then
\[
\spt(||V||) = \lim_{k\to\infty}M_k.
\] 
\end{lemma}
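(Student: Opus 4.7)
The plan is to deduce the lemma from the Hausdorff convergence result for phase fields (Theorem \ref{corollary hausdorff}) via a diagonal construction with optimal profile recovery sequences. Since we only consider subsequences along which the Hausdorff limit of $M_k$ exists, the $M_k$ lie in a common compact region; accordingly we fix a large open $\Omega\cc\R^3$ with $\dist(M_k,\partial\Omega)\geq \delta>0$ uniformly in $k$. As each compact orientable $C^2$-surface in $\R^3$ bounds an open set, we may associate to each $M_k$ a signed distance $\sdist(\cdot,M_k)$.

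For each $k$ the standard optimal profile
\[
u_{\eps,k}(x) = q^\eps\!\left(\sdist(x,M_k)/\eps\right),
\]
with $q^\eps$ the monotone truncation of $\tanh(\cdot/\sqrt{2})$, satisfies as $\eps\to 0$
\[
\mu_{\eps,k}\wto \|V_k\|, \qquad \alpha_{\eps,k}\wto H_{V_k}^2\cdot \|V_k\|, \qquad \W_\eps(u_{\eps,k})\to \W(V_k),
\]
while $\{u_{\eps,k}=0\}=M_k$ exactly, because $q^\eps$ is odd, monotone, and vanishes at $0$. A diagonal selection of $\eps_k\to 0$ then yields $u_k:=u_{\eps_k,k}$ satisfying the standing hypotheses of the main theorem: $\limsup_k\E_{\eps_k}(u_k)<\infty$, $\mu_k\wto \|V\|=:\mu$, and, after further extraction, $\alpha_k\wto \alpha$ with $\alpha\geq H_\mu^2\cdot\mu$.

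Applying Theorem \ref{corollary hausdorff} with $I=\{0\}$ to the sequence $u_k$, a further subsequence satisfies $\{u_k=0\}\to \spt(\mu)\cup\{x_1,\dots,x_N\}$ in Hausdorff distance; since $\{u_k=0\}=M_k$, this proves the first claim. For the ``moreover'' assertions: if $\lim_k\W(V_k)=\W(V)$, the diagonal may be arranged so that $\alpha(\Omega)=\W(V)=\int H_\mu^2\d\mu$, and combined with $\alpha\geq H_\mu^2\cdot\mu$ this forces $\alpha=H_\mu^2\cdot\mu$, hence $\spt(\alpha)\subseteq \spt(\mu)$, and Theorem \ref{corollary hausdorff} removes the additional points. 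If instead every $M_k$ is connected, then the Hausdorff limit $K$ is connected (a standard property of Hausdorff limits), so any isolated point of $K\setminus\spt(\mu)$ would contradict connectedness, forcing $K=\spt(\mu)$.

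The main technical obstacle is setting up the diagonal so that $\eps_k\to 0$ is slow enough for $\mu_{\eps_k,k}\wto\|V\|$ and $\alpha_{\eps_k,k}\wto\alpha$ to hold as measures on $\R^3$, with $\E_{\eps_k}(u_k)$ bounded and, when required, asymptotically equal to $\W(V)$. This is routine via metrisation of weak-$*$ convergence and a Cantor selection, but since the tubular-neighbourhood and $C^2$-regularity constants of $M_k$ are not assumed uniform in $k$, the admissible range of $\eps$ depends on $k$ and $\eps_k$ must be tuned accordingly.
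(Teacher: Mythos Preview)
Your proposal is correct and follows essentially the same route as the paper: construct optimal-profile phase fields $u_k=q_{\eps_k}(\sdist(\cdot,M_k)/\eps_k)$ with $\{u_k=0\}=M_k$, choose $\eps_k\to 0$ diagonally so that $\mu_k\wto\|V\|$ and $\alpha_k$ has a limit, and then invoke Theorem~\ref{corollary hausdorff}; the connected case and the recovery-sequence case are handled identically. The only cosmetic differences are that the paper secures uniform boundedness of the $M_k$ via Simon's diameter estimate $\diam(M_k)\leq \tfrac{2}{\pi}\sqrt{\H^2(M_k)\,\W(M_k)}$ (while noting one could equally assume containment in a fixed ball), whereas you extract it from the assumed existence of a Hausdorff limit along the subsequence; and the paper packages your argument that $\alpha(\Omega)=\W(V)$ forces $\spt(\alpha)\subset\spt(\mu)$ into a citation of Corollary~\ref{corollary recovery sequence}.
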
 

\begin{proof}
Due to an improved estimate of L.~Simon \cite[Lemma 1]{MR1650335}, the manifolds have uniformly bounded diameter
\[
\diam(M_k)\leq \frac2\pi\,\sqrt{\H^2(M_k)\,\W(M_k)\,}.
\]
If they go off to infinity, $||V||\equiv 0$ and the Hausdorff limit is empty, so there is nothing to show. Thus we may assume that $M_k \subset B_R(0)$ for a suitably big $R>0$. If we wished to avoid geometric measure theory altogether, we could alternatively assume that the manifolds $M_k$ are a priori all contained in some bounded subset of $\R^3$. 

A simple contradiction shows that $\spt(||V||) \subset \lim_{k\to\infty}M_k$, so only the inverse direction is difficult. This concerns the uniform or essentially uniform convergence of phase fields away from $u_\eps$ which we established using exclusively PDE techniques and no geometric measure theory at all.

As $M_k$ is compact, orientable and embedded, it is the boundary of a set $E_k\subset\R^3$. Since furthermore $M_k\in C^2$, there is a sequence $\eps_k\to 0$ such that the signed distance function $\sdist(\cdot, M_k)$ is $C^2$-smooth on
\[
U_k = \{x\in \R^n\:|\: \dist(x, M_k) < 2\,\sqrt{\eps_k\,}\}
\]
and we can consider the sequence
\[
u_k\colon B_{R+1}(0)\to \R, \qquad u_k(x) = q_{\eps_k}(\sdist(x,M_k)/\eps_k).
\]
By construction, $u_k\in -1 + W^{2,2}_0(B_{R+1}(0))$ and $M_k \equiv \{u_k = 0\}$. If we choose $\eps_k$ sufficiently small, it becomes obvious that 
\[
\lim_{k\to \infty}\mu_k = ||V||, \qquad \lim_{k\to\infty}\alpha_k(\R^3) = \lim_{k\to\infty}\W(M_k).
\]
We can therefore invoke Corollary \ref{corollary hausdorff} (with boundary values) to see that
\[
\lim_{k\to\infty}M_k = \lim_{k\to \infty}\{u_k=0\} = \spt(||V||) \cup \{x_1, \dots, x_N\}
\]
for a finite collection of points $x_1,\dots,x_N\in \ol{B_{R}(0)}$. Now assume additionally that $M_k$ is connected for all $k\in\N$. Then, by standard results on Hausdorff convergence, $\lim_{k\to\infty}M_k$ is connected, so the finite collection of points must be empty.

Last, assume that we have a recovery sequence, i.e.\ $\lim_{k\to\infty} \W(M_k) = \W(V)$. If we choose $\eps_k$ sufficiently small also $\W_{\eps_k}(u_k)\to\W(V)$, thus 
\[
\lim_{k\to\infty}M_k = \lim_{k\to \infty}\{u_k=0\} = \spt(||V||)
\]
as explained in Corollary \ref{corollary recovery sequence}.
\end{proof}

\begin{corollary}
If $M_k$ is connected for all $k\in\N$, then also $\spt(||V||)$ is connected.
\end{corollary}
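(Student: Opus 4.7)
The plan is to deduce this directly from the preceding lemma combined with the classical fact that the Hausdorff limit of a sequence of connected compact sets (inside a fixed bounded region of $\R^n$) is itself connected, provided it is nonempty.

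More concretely, I would first invoke the preceding lemma to get $\spt(\|V\|) = \lim_{k\to\infty} M_k$ in Hausdorff distance (passing to a subsequence if needed to ensure that the limit exists; the uniqueness of the limit, which is pinned down by $\spt(\|V\|)$, then means the full sequence converges). If $\spt(\|V\|) = \emptyset$, connectedness is vacuous, so assume otherwise.

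Next I would recall the standard topological fact: if $K_k \subset \overline{B_R(0)}$ is a sequence of connected compact sets with $K_k \to K$ in Hausdorff distance and $K \neq \emptyset$, then $K$ is connected. This is proved by contradiction: if $K = A \sqcup B$ is a nontrivial partition into two nonempty compact (hence positively separated) sets, then for $k$ large each $M_k$ must intersect the two disjoint open neighborhoods of $A$ and $B$ (by Hausdorff convergence), while $M_k$ itself cannot meet the region between them (again by Hausdorff convergence, since that region is bounded away from $K$), contradicting the connectedness of $M_k$.

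Applying this fact to the connected sets $M_k$ whose Hausdorff limit is $\spt(\|V\|)$ yields that $\spt(\|V\|)$ is connected. There is really no obstacle here beyond citing (or briefly recalling) this elementary topological lemma, since the essentially uniform convergence machinery that did the heavy lifting has already been packaged into the previous lemma.
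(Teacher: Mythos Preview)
Your proposal is correct and matches the paper's approach: the preceding lemma already records that under the connectedness hypothesis $\spt(\|V\|) = \lim_{k\to\infty} M_k$ in Hausdorff distance, and the paper (inside that lemma's proof) invokes exactly the same ``standard results on Hausdorff convergence'' --- that a Hausdorff limit of connected compacta is connected --- so the corollary is immediate.
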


In particular, we have shown with phase field techniques that the problem of minimising Willmore's energy in the class of connected surfaces arising as the limits of boundaries is well posed in three dimensions. This is by no means immediate since it is not obvious whether thin cylinders connecting bigger pieces of the manifolds $M_k$ can collapse away in measure with bounded Willmore energy while keeping the bigger parts at a positive distance. 

It is easy to give an example $V_k\wto V$ with additional points in the Hausdorff limit. Namely, Take $M_k\equiv M \cup \partial B_{r_k}(x)$ for some $M$, $x\notin M$ and $r_k\to 0$. Then, if $V$ denotes the varifold induced by $M$, we have
\[
V_k\wto V, \qquad \spt(V) = M, \qquad \lim_{k\to \infty}M_k = M \cup\{x\}, \qquad \W(M_k) \equiv \W(M) + 16\pi.
\]
A diffuse analogue of this example will be discussed in Example \ref{second example}.

\section{Counterexamples to Uniform Convergence}\label{section examples}

In this section, we give examples showing that our results are optimal. All constructions are simple perturbations of an optimal interface recovery sequence.
 
\begin{example}\label{introductory example}
Let $E\cc\Omega$ with $\partial E\in C^2$ and denote by $d(x) = \sdist(x,\partial E)$ the signed distance function from $\partial E$. Like in the introduction, we take an optimal interface transition $u^\eps(x) = q_\eps(d(x)/\eps)$ where $q_\eps$ is constant for arguments larger than $\eps^{-1/2}$. Now take $x_0\notin \partial E$, $g\in C_c^\infty(\R^n)$ and set 
\[
u^\eps_g(x) \coloneqq  u^\eps(x) + \eps^\beta\,g(\eps^{-\gamma}(x-x_0)).
\]
For small enough $\eps>0$, we know that $u^\eps\equiv \pm 1$ close to $x_0$, which simplifies the energies of the modified functions. We may assume that $u^\eps\equiv 1$ around $x_0$. If $\gamma>0$ (or $\gamma=0$ and $g$ has sufficiently small support), this implies the following identities.
\begin{align*}
S_\eps(u^\eps_g) &= S_\eps(u^\eps) + \eps^{1+2\beta+(n-2)\gamma}\int_{\R^n}\frac12\,|\nabla g|^2\d x + \eps^{2\beta+n\gamma-1}\int_{\R^n}g^2\,\frac{(2+\eps^{\beta}g)^2}4\dx,\\
\W_\eps(u^\eps_g) &= \W_\eps(u^\eps) + \int_{\R^n}\frac1\eps\bigg(\eps^{1+\beta-2\gamma}\,\Delta g - \eps^{\beta-1}\,g\,(1+\eps^\beta\,g)\,(2+\eps^\beta\,g)\bigg)^2(\eps^{-\gamma}(x-x_0))\dx.
\end{align*}
In the bending energy, both terms scale differently unless 
\[
1+\beta-2\gamma = \beta-1 \qquad\LRa\qquad \gamma=1.
\]
In this situation, we can simplify the integral to give
\[
\W_\eps(u^\eps_\gamma) \approx \W_\eps(u^\eps) + \eps^{2\beta-3+n}\int_{\R^n}(\Delta g - 2\,g)^2\dx
\]
under the assumption that $\beta>0$. For a compactly supported non-zero function $g$ the last term cannot be zero, so we have the heuristic condition
\[
2\beta -3+n\geq 0\quad\LRa\quad \beta\geq \frac{3-n}2
\]
for the energy to remain finite. Conversely, it is easy to see that the energy does remain finite in these cases in both $n=2$ and $n=3$ dimensions. Setting $\beta=0$ shows that $u_\eps$ need not converge uniformly to $\pm 1$ away from the interface in three dimensions. In two dimensions, setting $\beta=1/2$ shows that we cannot obtain a convergence rate better than $\sqrt{\eps}$. 

Note in particular that we have $\mu^\eps_g(B_r(x)) = O(\eps^2)$ if $\alpha_g$ has an atom at $x$ and $\mu^\eps_g(B_r(x)) = o(\eps^2)$ otherwise, both in two and three dimensions. In three dimensions, we can consider the function
\[
e\colon [0,1)\to(0,\infty), \quad e(\theta)\coloneqq  \inf\big\{ [\W_1 + S_1](u)\:|\:u\in 1+ W^{2,2}_0(B_1(0)), \: u(0) = \theta\big\}.
\]
It is positive and monotone decreasing and satisfies $\lim_{\theta\to 1}f(\theta) = 0$, so we can take a sequence $\theta_n\to 1$ such that
\[
\sum_{n=1}^\infty f(\theta_n)<\infty.
\]
Then we take corresponding minimisers $g_n$, a dense subsequence $x_n$ in $\Omega\setminus \partial E$ and define 
\[
\eps_n= \min\left\{\min_{1\leq i\neq j\leq n} \frac{|x_i-x_j|}2, \:\left({\min_{1\leq i\leq n} \dist(x_i, \partial E)/2\,}\right)^2\right\}, \quad u_n(x) = \begin{cases}\pm g_n((x-x_i)/\eps_n) &\text{ in }B_{\eps_n}(x_n)\\ u(x) &\text{else}
\end{cases}
\]
with the choice of sign for $g_n$ such that the function is continuous. Then $\eps_n\to 0$, $u_n\to u$ in $L^1(\Omega)$ and essentially uniformly, but there exists no open set $\Omega'\cc \Omega\setminus\partial E$ such that $u_n\to u$ uniformly on $\Omega'$.
\end{example}

The next example gives a different modification in three dimensions only. It shows that in $\R^3$, uniform convergence away from the interface may fail even if the discrepancy measures $|\xi_\eps|$ vanish faster than polynomially in $\eps$. Another implication is that there is no guaranteed rate of convergence for $\mu_\eps(\Omega')\to 0$ for $\Omega'\cc\Omega\setminus\spt(\mu)$. 

\begin{example}\label{second example}
Consider a set $\Omega'\cc\Omega\setminus\spt(\mu)$ and $x_0\in\Omega'$. Assume that $u^\eps$ is an optimal profile type recovery sequence, or at least that $u^\eps$ is constant on $\Omega'$. Let $r>0$ such that $B_r(x_0)\cc \Omega'$ and $\eps^{3/4} < r_\eps< r/2$. Then the functions
\[
\bar u_\eps(x) = \begin{cases}u^\eps(x)& x\notin B_r(x_0)\\ \pm q_\eps(\sdist(x, \partial B_{r_\eps}(x_0))) & x\in {B_r(x_0)}\end{cases}
\]
are $C^2$-smooth (if the sign of the optimal profile is chosen correctly and $q_\eps$ is constant beyond $\sqrt{\eps}$). This is a recovery-type sequence for $\partial E$ with an additional interface at spheres $\partial B_{r_\eps}(x_0)$ and can easily be seen to satisfy
\[
\bar\mu_\eps\wto \mu, \qquad \bar\mu_\eps(\Omega') \approx \frac{4\pi\,r_\eps^2}3, \qquad \alpha_\eps(\Omega') \approx 16\pi
\]
since spheres of any radius have Willmore energy $16\pi$ in three dimensions with our normalisation of the Willmore functional. As $r_\eps$ may go to zero arbitrarily slowly, so can $\mu_\eps(\Omega')$. Due to the optimal interface construction, $|\xi_\eps|(\Omega')\leq C\,\eps^\gamma$ for all $\gamma\in(0,\infty)$. 
\end{example}

This shows that no penalisation of the discrepancy measures can enforce uniform convergence away from the interface in three dimensions. In two dimensions, this does not work since small circles have large elastica energy while the Willmore functional on surfaces in $\R^3$ is scaling invariant.

We have investigated similar problems in a previous article \cite{MR3590663} with a different focus and under the additional assumption that 
\begin{equation}\label{equation boundary conditions}
u_\eps\in W^{2,2}_{loc}(\R^n)\text{ and }u_\eps\equiv -1\text{ outside }\Omega,
\end{equation}
although essentially uniform convergence could not be established. By extension theorems, these conditions are equivalent to $u_\eps \in -1 +  W^{2,2}_0(\Omega)$ and thus implied if $\partial \Omega\in C^2$ and $u_\eps\equiv -1$, $\partial_\nu u_\eps\equiv 0$ on $\partial\Omega$. This is a suitable model for some problems in biological applications where we are interested in minimising $\W$ among a class of connected surfaces confined to the container $\Omega$. If we assume boundary conditions \eqref{equation boundary conditions} or periodic boundary conditions, the results may be sharpened as follows:

\begin{enumerate}
\item $u_\eps$ is bounded on $\Omega$ and $u_\eps\to u$ in $L^p(\Omega)$ for all $p<\infty$,

\item it is H\"older continuous with constants as above on every ball $B_\eps(x)\cap \Omega$ for $x\in\overline\Omega$,

\item we may replace ``finitely many points in $\Omega$ (or $\Omega\setminus \spt(\mu)$)" with ``finitely many points in $\overline\Omega$ (or $\overline\Omega\setminus\spt(\mu)$)",

\item the Hausdorff limit is $K=\spt(\mu)$ if $\alpha$ has no atoms outside $\spt(\mu)$, and $K=\spt(\mu)\cup\{x_1,\dots, x_N\}$ in general.
\end{enumerate}
 
In this article, all regularity and convergence results we obtained were valid only in the interior of $\Omega$, away from the boundary. The treatment of boundary values is different, since their blow-ups are at most governed by solutions to the stationary Allen-Cahn equation on half-space, which gives us much less control than the same equation on the whole space. For example, the Modica gradient bound clearly fails.

In certain situations, one may be tempted to prescribe boundary conditions $u_\eps \equiv +1$ on $\Gamma_+\subset\partial\Omega$ and $u_\eps\equiv -1$ on $\Gamma_-\subset\partial\Omega$ and leave $u_\eps$ free on $\partial\Omega\setminus (\Gamma_+\cup \Gamma_-)$ to make an optimal transition. However (for simple geometries), it can be seen that the infimum energy of functionals of the type
\[
\E_\eps = \W_\eps + \eps^{-2\sigma}\,(S_\eps-S)^2
\]
under these boundary conditions is identically zero, where minimising sequence is given by the superposition of optimal profiles for a minimal surface spanning a suitable boundary curve and a solution to the stationary Allen-Cahn equation on half space with suitable boundary values and rescaled, inserted at a convex boundary point. The super-position still has almost zero Willmore energy and the boundary modification can be scaled precisely to create an atom of $\mu$ of the right size. More explicit constructions (and boundary values that prevent these pathological examples) will be presented in \cite{DW_bound}.

\section*{Acknowledgements}
PWD and SW would like to thank M.~R\"oger (Dortmund) for inspiring discussions. PWD acknowledges financial support from the German Scholars Organization / Carl Zeiss Stiftung via their Wissenschaftler-R\"uckkehrerprogramm.
SW would like to thank Durham University for financial support through a Durham Doctoral Studentship and Y.~Tonegawa (Tokyo) for helpful conversations.

\end{document}